\newcommand{\N}{\mathbb{N}}
\newcommand{\Q}{\mathbb{Q}}
\newcommand{\C}{\mathbb{C}}
\newcommand{\R}{\mathbb{R}}
\newcommand{\Z}{\mathbb{Z}}
\newcommand{\Fp}{\mathbb{F}_p}
\newcommand{\Dp}{\mathfrak{D}_p}
\newcommand{\ps}[1]{[\![ #1 ]\!]}
\newcommand{\ua}{\boldsymbol{\alpha}}
\newcommand{\ub}{\boldsymbol{\beta}}
\newcommand{\um}{\boldsymbol{\mu}}
\newcommand{\ug}{\boldsymbol{\gamma}}
\newcommand{\val}{\text{\rm val}}
\newcommand{\NP}{\text{\rm NP}}
\newcommand{\ev}{\varepsilon}
\newcommand{\nuval}{\val_{p,\nu}}
\newcommand{\zeroval}{\val_{p,0}}
\newcommand{\equivm}{\equiv_{\times}}
\newcommand{\softO}{O^{\sim}}
\newcommand{\denom}{\text{den}}
\definecolor{input}{HTML}{303060}
\definecolor{output}{HTML}{804000}
\definecolor{string}{HTML}{A02020}
\definecolor{ring}{HTML}{A020A0}
\definecolor{function}{HTML}{205080} 
\definecolor{constructor}{HTML}{205080}
\definecolor{method}{HTML}{205080}
\definecolor{keyword}{HTML}{008000}
\definecolor{error}{HTML}{B01010}
\definecolor{comment}{HTML}{606060}
\def\pFq#1#2#3{\mathcal H\left(#1, #2; #3\right)}
\def\pGq#1#2{\mathcal H\left(#1; #2\right)}
\setlist[itemize]{leftmargin=1.5em}
\begin{abstract}
    We discuss algorithms for arithmetic properties of hypergeometric functions.
    Most notably, we are able to
    compute the $p$-adic valuation of a hypergeometric function on any disk
    of radius smaller than the $p$-adic radius of convergence. This we 
    use, building on work of Christol, to determine the set of prime numbers modulo
    which it can be reduced.
    Moreover, we describe an algorithm to find an annihilating polynomial
    of the reduction of a hypergeometric function modulo~$p$.
\end{abstract}
\keywords{Algorithms, Hypergeometric Series, Algebraicity, Tropical algebra, Newton polygons}
\title[Algorithms for Hypergeometric Functions]{Algorithms for Algebraic and Arithmetic Attributes\\of Hypergeometric Functions}
\author{Xavier Caruso}
\affiliation{
	\institution{CNRS; IMB, Université de Bordeaux}
    \city{Talence}
    \country{France}
	}
\email{xavier@caruso.ovh}
\author{Florian Fürnsinn}
\affiliation{
	\institution{University of Vienna, Faculty of Mathematics}
    \city{Vienna}
    \country{Austria}
	}
\email{florian.fuernsinn@univie.ac.at}
\begin{document}

\SetKwInOut{Input}{input}
\SetKwInOut{Output}{output}
\SetKw{KwOr}{or}
\SetKw{KwAnd}{and}
\SetKw{Continue}{continue}
\SetKw{Pop}{pop}
\SetKw{Append}{append}

\thanks{
    The second named author was funded by a DOC Fellowship (27150) of the
    \href{https://www.oeaw.ac.at/en/}{Austrian Academy of Sciences} at the
    University of Vienna. Further he thanks the French–Austrian project EAGLES
    (ANR-22-CE91-0007 \& FWF grant 
    \href{https://doi.org/10.55776/I6130}{10.55776/I6130}) for financial 
    support.

    The authors thank \href{https://oead.at/en/}{Austria’s Agency for Education
    and Internationalisation (OeAD)} and Campus France for providing funding
    for research stays via WTZ collaboration project/Amadeus project FR02/2024.
}

\maketitle

\section{Introduction}

A \emph{hypergeometric function} with \emph{top parameters}
$\ua \coloneqq (\alpha_1,\ldots, \alpha_n)\in \C^n$ and \emph{bottom parameters} 
$\ub \coloneqq (\beta_1,\ldots, \beta_m) \in \C^m$
is defined as the power series 
\[\pFq{\ua}{\ub}{x} \coloneqq
 \sum_{k=0}^\infty \frac{(\alpha_1)_k\cdots(\alpha_n)_k}
 {(\beta_1)_k\cdots (\beta_m)_k}\cdot x^k\in \C\ps{x},\]
where $(\gamma)_k \coloneqq \gamma (\gamma+1)\cdots (\gamma+k-1)$
denotes the \emph{rising factorial} or \emph{Pochhammer symbol}. 

We restrict ourselves to rational parameters for our investigation; in
particular our hypergeometric functions are elements of $\Q\ps x$.
To simplify the exposition, we will also suppose 
that none of the $\alpha_i$, $\beta_j$ are nonpositive
integers.
We mention nevertheless that all our algorithms extend without
this hypothesis, assuming only that the hypergeometric function
$\pFq{\ua}{\ub}{x}$ is well-defined.

Usually in the literature, hypergeometric functions are normalized differently.
The hypergeometric function ${}_{n}F_m(\ua, \ub; x)$ is defined as
$\pFq{\ua}{\ub'}{x}$, with $\ub'=(\beta_1,\ldots, \beta_m, 1)$, \emph{i.e.}, it
includes an additional parameter $1$ at the bottom. Conversely,
$\pFq{\ua}{\ub}{x} = {}_{n+1}F_m(\ua', \ub; x)$ with 
$\ua' = (\alpha_1,\ldots, \alpha_n, 1$). The classical notation is convenient
to derive the differential equation 
\[\big(x (\vartheta{+}\alpha_1)\cdots (\vartheta{+}\alpha_n) -
 \vartheta (\vartheta{-}\beta_1)\cdots (\vartheta{-}\beta_m)\big)
 \cdot {}_{n}F_m(\ua, \ub; x) = 0,\]
where $\vartheta = x \frac{\mathrm{d}}{\mathrm{d} x}.$
It shows that hypergeometric functions are \emph{D-finite},
\emph{i.e.}, they satisfy a nonzero 
linear differential equation with polynomial
coefficients.
However, for our arguments and algorithms, it proves more 
convenient not to insist on the additional parameter $1$.

Hypergeometric functions play an important role in combinatorics and 
physics, for example the generating functions of many well-known 
sequences, such as the Catalan numbers, are of this shape. At the same 
time, hypergeometric functions serve as test examples for conjectures on 
algebraic and D-finite series. For example, Christol's conjectured
in~\cite{Chr86, Chr90} that \emph{globally bounded} D-finite series
(\emph{i.e.,} D-finite series with positive radius of convergence,
that can be reduced modulo almost all 
primes) can be written as diagonals of multivariate
algebraic power series and then gave the first evidences by studying 
the class of hypergeometric functions. As of 2026, Christol's 
conjecture is still unsolved, even in the hypergeometric case, 
despite recent progress~\cite{BBC+12, BBC+13, BBMW15, AKM20, BY22}.

On a different note, we recall that a power series $f(x)\in K \ps x$ is 
called \emph{algebraic}, if there exists a nonzero polynomial $P[x,y]\in 
K[x, y]$, such that $P(x, f(x))=0$.
Although D-finite power series $f(x)\in \Q\ps x$ are usually not algebraic,
it is frequently the case that their reductions modulo the primes are
(see \cite[Subsection~2.1]{CFV25} for many examples).
For example, if Christol's conjecture on diagonals proves to be true,
Furstenberg's theorem~\cite{Fur67} would imply that the reduction of
any globally bounded D-finite series is algebraic.
In the article~\cite{CFV25}, written jointly with Vargas-Montoya,
we went further in this direction and proposed a conjecture
predicting what
the Galois groups of D-finite functions modulo the primes could be.

Hypergeometric series form a class of examples for which algebraicity
properties are well studied and understood: while there is only
a small set of parameters $(\ua, \ub)$ leading to algebraic series over 
$\Q$~\cite{Sch73, Lan04, Lan11, Err13, Chr86, BH89, Kat90, FY24}, it is
known that reductions modulo $p$, when they exist, are always
algebraic~\cite{Chr86, Chr86a, Var21, Var24} and certain Galois groups
were computed in~\cite[Subsection~3.3]{CFV25}.

\paragraph{Our contributions.}

The aim of the present article is to develop algorithmic tools
for hypergeometric series, focusing particularly on reductions modulo
primes and their algebraicity properties.

To start with, we describe an algorithm to determine
the set of prime numbers, for which a given hypergeometric function
can be reduced (Subsection~\ref{ssec:red}).
Our algorithm heavily relies on routines to compute the $p$-adic
valuation of hypergeometric functions, which themselves find their
roots in Christol's article~\cite{Chr86}, while similar approaches 
were also studied in \cite[Prop.~24]{DRR17}. We develop those routines
in Subsection~\ref{ssec:val} and extend them to the
computation of Newton polygons (Subsection~\ref{ssec:NP}) and 
evaluation of hypergeometric functions at $p$-adic arguments
(Subsection~\ref{ssec:padiceval}).

Then, building on the work of Christol, Vargas-Montoya
and the authors~\cite{Chr86, Chr86a, Var21, Var24, CFV25}, we design an 
algorithm 
to compute an annihilating polynomial of any hypergeometric series modulo 
$p$ (Subsection~\ref{ssec:algebraicity}). We underline that our approach 
gives in addition a new proof of algebraicity without any assumption 
on the parameters $(\ua, \ub)$, nor on the prime~$p$ (aforementioned
references often exclude certain cases for simplicity).
In the same spirit, we mention that, while most of the questions considered 
in this paper simplify when the prime number $p$ is chosen large enough 
with respect to the absolute value
of the parameters and their common denominator, we especially take care
to treat small primes as well. We believe that it is important for 
applications, given that
large primes usually lead to huge outputs, \emph{e.g.} huge annihilating
polynomials, from which it looks more difficult to extract relevant information.

All the algorithms discussed in this article have been implemented in
SageMath~\cite{github,CF26-sage} and can be tested online at the URL:

\noindent
{\color{magenta}%
\url{https://xavier.caruso.ovh/notebook/hypergeometric-functions/}}

\section{Valuations (the $p$-adic picture)} \label{sec:val}

In this section, we fix a prime number $p$ together with
two tuples of parameters
$\ua \coloneqq (\alpha_1,\ldots, \alpha_n)\in \Q^n$ and
$\ub \coloneqq (\beta_1,\ldots, \beta_m) \in \Q^m$. 
The aim of this section is to study the behavior of the $p$-adic
valuation, denoted $\val_p$, of the coefficients of associated hypergeometric series 
$\pFq{\ua}{\ub}{x}$.

To start with, we observe that, if $\gamma$ is a rational number
with $\val_p(\gamma) < 0$, one has $\val_p((\gamma)_k) = k \cdot \val_p(\gamma)$.
Hence the valuations of the coefficients of $\pFq{\ua}{\ub}{x}$
are directly related to the valuations of the coefficients of 
$\pFq{\ua'}{\ub'}{x}$ where the new parameters $\ua'$ and
$\ub'$ are obtained from $\ua$ and $\ub$ by removing the $\alpha_i$
and $\beta_j$ with negative $p$-adic valuation.
For this reason, we will always assume in what follows that
$\val_p(\alpha_i)$ and $\val_p(\beta_j)$ are all nonnegative.
We set $h(x) \coloneq \pFq{\ua}{\ub}{x}$ and we write
$h_k$ for the coefficient of $h(x)$ in $x^k$.

\subsection{Zigzag functions}
\label{ssec:zigzag}

We fix a positive integer $r > 0$.
Let $w_r : \N \to \N$ be the sequence defined by
$w_r(0) = 0$ and the recurrence relation
\begin{align*}
w_r(k+1) = w_r(k) 
& + \big|\{ 1 \leq i \leq n : k + \alpha_i \equiv 0 \bmod{p^r} \}\big| \\
& - \big|\{ 1 \leq j \leq m : k + \beta_j \equiv 0 \bmod{p^r} \}\big|.
\end{align*}

We follow Kedlaya's terminology, see for example~\cite{Ked22}, and call 
$w_r$ a \emph{zigzag function}.
We start by noticing that
\begin{equation}
\label{eq:wrperiodic}
\forall k \geq 0, \quad w_r(k + p^r) = w_r(k) + (n - m).
\end{equation}
Besides, the function $w_r$ does not vary often. More precisely, we
define the set
$\Gamma \coloneq \{1, \alpha_1, \ldots, \alpha_n, \beta_1, \ldots, \beta_m\}$
and denote by $s$ its cardinality. Let also
$$0 = \xi_{r,0} \leq \xi_{r,1} \leq \cdots
    \leq \xi_{r,s-1} < p^r$$
be the reductions modulo $p^r$ of $1 - \gamma$, for $\gamma \in \Gamma$,
sorted in ascending order.
We prolong the sequence $(\xi_{r,i})_i$ by repeating the same
values translated by $p^r$, $2p^r$, \emph{etc.} Formally, we
set $\xi_{r,i+s} = \xi_{r,i} + p^r$ for $i \geq 0$.
Then, on each interval $I_{r,i} := [\xi_{r,i}, \xi_{r,i+1})$,
the sequence $w_r$ is constant.

We note that, the reductions modulo $p^r$ of a rational number $x$
with denominator coprime with $p$, can be read off on its $p$-adic
expansion. More precisely, if
$$x = x_0 + x_1 p + x_2 p^2 + \cdots \in \mathbb Z_p$$
we have $x \bmod p^r = x_0 + x_1 p + \cdots + x_{r-1} p^{r-1}$.
If we assume moreover that $x$ is not a nonnegative integer, its
$p$-adic expansion is not finite, which implies
that $x \bmod p^r$ goes to infinity when $r$ grows.
More precisely, using that the sequence $(x_i)$ is ultimately
periodic (since $x$ is a rational number), we find that 
$x \bmod p^r$ grows at least linearly in $p^r$.
As a consequence, we derive that $\xi_{r,1} \geq c p^r$ for some
constant $c$ depending only on $\ua$ and $\ub$.

\begin{lemma}
\label{lem:vpak}
For $k \geq 0$, we have
$\val_p(h_k) = \sum_{r=1}^\infty w_r(k)$.
\end{lemma}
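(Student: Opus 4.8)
The plan is to compute $\val_p(h_k)$ directly from the explicit product formula
$h_k = \prod_{i=1}^n (\alpha_i)_k \big/ \prod_{j=1}^m (\beta_j)_k$,
so that $\val_p(h_k) = \sum_i \val_p\big((\alpha_i)_k\big) - \sum_j \val_p\big((\beta_j)_k\big)$.
Thus it suffices to establish, for a single parameter $\gamma$ with $\val_p(\gamma)\ge 0$, a formula of the shape $\val_p\big((\gamma)_k\big) = \sum_{r=1}^\infty\big|\{0\le \ell < k : \ell+\gamma\equiv 0\bmod p^r\}\big|$; summing this identity over the $\alpha_i$ with a plus sign and the $\beta_j$ with a minus sign, and then interchanging the (finite, for each fixed $k$) double sum, yields exactly $\sum_{r=1}^\infty w_r(k)$ because $w_r(k) = \sum_i |\{0\le\ell<k:\ell+\alpha_i\equiv 0\}| - \sum_j|\{0\le\ell<k:\ell+\beta_j\equiv 0\}|$ by telescoping the defining recurrence of $w_r$ from $0$ to $k$.

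The heart of the matter is therefore the single-parameter identity. Since $(\gamma)_k = \gamma(\gamma+1)\cdots(\gamma+k-1) = \prod_{\ell=0}^{k-1}(\gamma+\ell)$, additivity of $\val_p$ over products gives $\val_p\big((\gamma)_k\big) = \sum_{\ell=0}^{k-1}\val_p(\gamma+\ell)$. Now for a $p$-adic integer $\gamma+\ell$ (note $\gamma\in\mathbb Z_p$ since $\val_p(\gamma)\ge0$), the standard ``layer-cake'' rewriting applies: $\val_p(\gamma+\ell) = \sum_{r=1}^\infty \mathbf 1\big[\val_p(\gamma+\ell)\ge r\big] = \sum_{r=1}^\infty \mathbf 1\big[\gamma+\ell\equiv 0\bmod p^r\big]$, valid because $\val_p(\gamma+\ell)$ is a nonnegative integer (or $+\infty$, in which case both sides are $+\infty$; but $\gamma$ is not a nonpositive integer, so this does not occur for $0\le\ell<k$, except we never need to worry since $\gamma+\ell\ne0$). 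Substituting and swapping the two sums — legitimate since all terms are nonnegative — produces the claimed expression.

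One should briefly address convergence and the swap at the level of $h_k$ itself. For fixed $k$, each inner count $|\{0\le\ell<k:\ell+\gamma\equiv 0\bmod p^r\}|$ vanishes once $p^r > k + \max_i|\alpha_i|\cdot(\text{something})$; more precisely it is zero as soon as $p^r$ exceeds the largest numerator appearing, so the outer sum over $r$ is finite and all rearrangements of the resulting finite collection of nonnegative integers are unconditionally justified. I expect the only mild subtlety — and hence the ``main obstacle,'' though it is a small one — to be bookkeeping the sign cancellations carefully: $w_r(k)$ can be negative for individual $r$, so one must sum the single-parameter identities \emph{first} (where everything is nonnegative) and only \emph{afterwards} recognize the difference of the two groups as $\sum_r w_r(k)$, rather than trying to prove a layer-cake formula for the signed quantity $\val_p(h_k)$ term by term in $r$.
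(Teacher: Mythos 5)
Your proposal is correct and takes essentially the same approach as the paper, which simply invokes ``the standard argument used to prove Legendre's formula'' without spelling it out; your layer-cake decomposition of $\val_p(\gamma+\ell)$ together with the telescoping of the recurrence for $w_r$ is precisely that argument, carried out parameter by parameter.
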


The proof of the lemma follows the standard argument used to prove
Legendre's formula giving the $p$-adic valuation of a factorial (see
also \cite[Equation~(6)]{Chr86}).

Noticing that $w_r$ vanishes on the interval $[0, \xi_{r,1})$,
we derive from the estimation $\xi_{r,1} \geq c p^r$ that the
the sum of \cref{lem:vpak} is finite for any given $k$; more
precisely it contains at most $\log_p(k) + O(1)$ terms.

\begin{proposition}
\label{prop:padicradius}
We have $\lim_{k \to \infty} \frac{\val_p(h_k)} k = \frac{n - m}{p-1}$.
\end{proposition}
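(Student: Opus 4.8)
The plan is to compute the limit by combining \cref{lem:vpak} with the periodicity relation~\eqref{eq:wrperiodic} and the estimate $\xi_{r,1} \geq c\,p^r$, which together pin down the asymptotics of each $w_r(k)$ and of their sum.

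First I would fix $r$ and analyze $w_r(k)/k$ as $k \to \infty$. The relation $w_r(k+p^r) = w_r(k) + (n-m)$ shows that $w_r$ grows linearly with average slope $(n-m)/p^r$; more precisely, writing $k = q p^r + t$ with $0 \le t < p^r$, we get $w_r(k) = q(n-m) + w_r(t)$, so $w_r(k) = \frac{n-m}{p^r}\,k + O(1)$ where the $O(1)$ is bounded by $\max_{0 \le t < p^r} |w_r(t)| + |n-m|$, a constant depending only on $r$, $\ua$, $\ub$. Summing over $r$ via \cref{lem:vpak}, and recalling that for fixed $k$ only the terms with $r \le \log_p(k) + O(1)$ are nonzero, I would like to conclude
\[
\frac{\val_p(h_k)}{k} = \sum_{r=1}^\infty \frac{w_r(k)}{k}
\;\longrightarrow\; \sum_{r=1}^\infty \frac{n-m}{p^r} = \frac{n-m}{p-1}.
\]

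The main obstacle is justifying the interchange of the limit in $k$ with the infinite sum over $r$, since the number of nonzero terms grows with $k$ and the $O(1)$ error in each term is $r$-dependent. The clean way to handle this is a uniform bound: I claim there is a constant $C$, independent of $r$, such that $|w_r(k)| \le C$ for \emph{all} $k$ in the range $0 \le k < \xi_{r,1}$... more usefully, that $w_r(t) $ stays within $[-m, n]$ for $t$ in the first period (each step changes $w_r$ by the multiplicity pattern, and over a full period the $n$ "up" events and $m$ "down" events each occur exactly once per parameter), so in fact $|w_r(t)| \le \max(n,m)$ uniformly in $r$ and $t$. Combined with $w_r$ vanishing on $[0,\xi_{r,1})$ and $\xi_{r,1} \ge c p^r$, this gives $0 \le w_r(k) \le \max(n,m)$ whenever $k < c p^r$ — wait, more carefully: for $k < c p^r$ we have $w_r(k) = 0$, and for general $k$ we have $\bigl|w_r(k) - \tfrac{n-m}{p^r}k\bigr| \le \max(n,m)+|n-m| =: C$, uniformly in $r$. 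Hence $\bigl|\sum_r w_r(k) - \tfrac{n-m}{p-1}k\bigr| \le C \cdot \#\{r : w_r(k) \neq 0\} + \tfrac{|n-m|}{p-1}k \cdot [\text{tail}]$; the first term is $O(\log k)$ and the tail of the geometric series beyond $r \approx \log_p(k/c)$ contributes $O(1)$. Dividing by $k$ and letting $k \to \infty$ kills both, yielding the result.

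To present this cleanly I would state the uniform bound $|w_r(k) - \tfrac{n-m}{p^r}k| \le C$ as a preliminary observation (proved from \eqref{eq:wrperiodic} and the fact that within one period each parameter triggers at most one increment), then write
\[
\left| \val_p(h_k) - \frac{n-m}{p-1}\,k \right|
\;\le\; \sum_{r=1}^{R} C \;+\; \sum_{r > R} \frac{|n-m|}{p^r}\,k,
\]
choosing $R = \lceil \log_p(k/c) \rceil$ so that $w_r(k)=0$ for $r > R$ makes the constant-$C$ terms only count $r \le R$ and simultaneously bounds the geometric tail by $\frac{|n-m|}{p^R}\cdot\frac{k}{p-1} \le \frac{|n-m|}{c(p-1)}$. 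The right-hand side is then $O(\log k)$, and dividing by $k$ and passing to the limit gives $\lim_{k\to\infty} \val_p(h_k)/k = \frac{n-m}{p-1}$, as desired. The only genuinely delicate point is the bookkeeping in the uniform bound, which is routine once one notes that $w_r$ is constant on the intervals $I_{r,i}$ and changes by a bounded amount at each of the finitely many breakpoints per period.
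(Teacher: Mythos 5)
Your proof is correct and follows essentially the same route as the paper's: bound $|w_r(k)| \le \max(n,m)$ for $k < p^r$, extend via the periodicity~\eqref{eq:wrperiodic} to the uniform estimate $|w_r(k) - \tfrac{n-m}{p^r}k| \le C$, sum over the $O(\log_p k)$ nonzero terms, and note that the geometric tail contributes $O(1)$, giving $\val_p(h_k) = \tfrac{n-m}{p-1}k + O(\log k)$. The only cosmetic difference is that the paper packages the geometric tail by writing the finite geometric sum $\sum_{r=1}^s \tfrac{n-m}{p^r}$ explicitly as $\tfrac{n-m}{p-1} - \tfrac{n-m}{p^s(p-1)}$ and then observing the correction term stays bounded, while you split at $R \approx \log_p(k/c)$ and bound the infinite tail directly; both yield the same $O(\log k)$ error.
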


\begin{proof}
For $k < p^r$, we have $-m \leq w_r(k) \leq n$.
Using Equation~\eqref{eq:wrperiodic}, this implies that there exists
some constant~$M$, such that
$$\textstyle \left|w_r(k) - k {\cdot} \frac{n - m}{p^r}\right| \leq M$$
for arbitrary $k$. Summing over $r$ and using a geometric sum and the
triangle inequality, we then find
$$\textstyle \left|\val_p(h_k) - k {\cdot} \frac{n - m}{p-1} + 
k{\cdot}\frac{n - m}{p^s(p-1)} \right| \leq M s,$$
where $s = \log_p(k) + O(1)$ is the number of summands involved in
the sum of \cref{lem:vpak}.
The term $k{\cdot} \frac{n - m}{p^s(p-1)}$ then remains bounded
when $k$ grows, proving that 
$$\textstyle \left|\val_p(h_k) - k{\cdot}\frac{n - m}{p-1} \right| 
\leq M \log_p(k) + O(1).$$
The proposition follows.
\end{proof}

\subsection{Drifted valuations}
\label{ssec:val}

The valuation of the hypergeometric series $h(x)$
is defined as the minimum of $\val_p(h_k)$ when $k$ varies.
More generally, we define its \emph{$\nu$-drifted valuation} by
$$\nuval (h(x))
  \coloneq \min_{k\geq 0} \, \val_p(h_k) + \nu k.$$
Working with drifted valuations 
is important to handle smoothly the reduction to
parameters whose denominators are not divisible by $p$,
and will be also crucial when
we will study Newton polygons in Subsection~\ref{ssec:NP}.

\subsubsection{Recurrence over the tropical semiring}
\label{sssec:recurrence}

It follows from \cref{prop:padicradius} that
$\nuval(h(x)) = -\infty$
when $\nu < \nu_0 \coloneq \frac{m - n}{p-1}$. From now on, we will
then assume that $\nu \geq \nu_0$.
For $r\in \N$ we introduce the drifted partial sums
$$\textstyle
\sigma_r : \N \to \Q, \quad k \mapsto \nu k + \sum_{s=1}^r w_s(k).$$
It directly follows from \cref{lem:vpak} that
$$\forall k \geq 0, \quad 
  \val_p(h_k) + \nu k = \lim_{r \to \infty} \sigma_r(k).$$
Besides, the sequence $\sigma_r$ satisfies the periodicity condition
\begin{equation}
\label{eq:sigmarperiodic}
\forall k \geq 0, \quad
  \sigma_r(k + p^r) = \sigma_r(k) + (\nu - \nu_0) p^r + \nu_0.
\end{equation}

We recall that we have defined in Subsection~\ref{ssec:zigzag} the
numbers $\xi_{r,i}$ and the intervals $I_{r,i} = [\xi_{r,i},
\xi_{r,i+1})$, in such a way that the function $w_r$ is constant
on each $I_{r,i}$.
We let $\mu_{r,i}$ denote the minimum of $\sigma_r$ on this interval
(with the convention that $\mu_{r,i} = +\infty$ if $I_{r,i}$ is empty).
We are going the prove that the $\mu_{r,i}$ are subject
to recurrence relations which allows to compute them recursively.
First of all, with respect to the variable $i$, we have the
relation
\begin{equation}
\label{eq:muperiodicity}
\mu_{r, i+s} = \mu_{r,i} + (\nu - \nu_0) p^r + \nu_0
\end{equation}
which follows directly from Equation~\eqref{eq:sigmarperiodic}.
Hence the knowledge of the first $s$ terms of the sequence 
$(\mu_{r,i})_{i \geq 0}$ is enough to
reconstruct the whole sequence.

We now move to the variable $r$. We first note that
the $\mu_{1,i}$ are easily computed since $\sigma_1$ is affine
on all intervals $I_{1,i}$.
The key observation to go from $r{-}1$ to $r$ is that each
$\xi_{r,i}$, being the reduction modulo $p^r$ of one of the
$1{-}\gamma$ for $\gamma \in \Gamma$, also appears in the sequence 
$(\xi_{r-1, j})_j$. In other words, there exists an index
$j_{r,i}$ such that $\xi_{r,i} = \xi_{r-1, j_{r,i}}$.
Setting $J_{r,i} = \Z \cap [j_{r,i}, j_{r,i+1})$, we deduce that
$I_{r,i}$ is the disjoint union of the $I_{r-1,j}$ for $j$
varying in $J_{r,i}$ and then
\begin{equation}
\label{eq:recmu}
\mu_{r,i} = w_r(\xi_{r,i}) + \min_{j \in J_{r,i}} \mu_{r-1,j}.
\end{equation}

It is convenient to reformulate what precedes in the language of
tropical algebra. We let $\mathcal T$ denote the tropical semiring
$\Q \sqcup \{+\infty\}$ equipped with the operations $\oplus = \min$
and $\odot = +$.
We set $\um_r = (\mu_{r,0}, \ldots, \mu_{r,s-1})$ and view it as a
row vector over $\mathcal T$.
Then Equation~\eqref{eq:recmu} translates to a relation of the form
\begin{equation}
\label{eq:recmutropical}
\um_r = \um_{r-1} \odot T_r
\end{equation}
where $T_r$ is a square matrix over $\mathcal T$ of size $s$,
which is explicit in terms of the functions $w_r$.

\subsubsection{Halting criterion}
\label{sssec:halting}

Equation~\eqref{eq:recmutropical} gives an efficient recursive method
to compute the $\mu_{r,i}$.
Besides, remembering that $\xi_{r,1}$ goes to infinity when $r$
grows, we find that 
$\nuval(h(x)) = \lim_{r \to \infty} \mu_{r,0}$.

It then only remains to find a criterion to detect when the limit
is attained.

Let $d$ denote a common denominator of the elements of $\Gamma$,
and let $e$ be the multiplicative order of $p$ modulo $d$.

\begin{lemma}
\label{lem:halt}
If $r$ fulfills the three following requirements
\begin{itemize}
\item 
  $r > e + \log_p \max \big\{1, 
  |1{-}\gamma_1|, \ldots, |1{-}\gamma_{s-1}|\big\}$,
\item $p^r (\nu - \nu_0) + \nu_0 \geq me$
\item $\mu_{r,i} \geq \mu_{r,0} + me$ 
  for all $i \in \{1, \ldots, s{-}1\}$,
\end{itemize}
then $\mu_{r',0} = \mu_{r,0}$ for all $r' \geq r$.
\end{lemma}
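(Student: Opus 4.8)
The plan is to show that under the three hypotheses, increasing $r$ by one cannot change the minimum $\mu_{r,0}$, and then iterate. By Equation~\eqref{eq:recmu}, we have $\mu_{r+1,0} = w_{r+1}(\xi_{r+1,0}) + \min_{j \in J_{r+1,0}} \mu_{r,j}$, where $J_{r+1,0}$ indexes the sub-intervals $I_{r,j}$ covering $I_{r+1,0} = [0, \xi_{r+1,1})$. Since $\xi_{r+1,0} = \xi_{r,0} = 0$ and $w_{r+1}(0) = w_r(0) = 0$ (no parameter is a nonpositive integer, so $k=0$ triggers no congruence modulo any $p^r$), the first term vanishes, and it suffices to prove that $\min_{j \in J_{r+1,0}} \mu_{r,j} = \mu_{r,0}$ — that is, none of the $\mu_{r,j}$ for $1 \le j$ with $\xi_{r,j} < \xi_{r+1,1}$ beats $\mu_{r,0}$. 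First I would use the first hypothesis, $r > e + \log_p\max\{1, |1{-}\gamma_i|\}$, to pin down where $\xi_{r+1,1}$ sits: for $r$ this large, the reduction modulo $p^r$ of each $1 - \gamma_i$ has stabilised its low-order digits and, more importantly, the periodicity of the $p$-adic digits (with period dividing $e$) forces $\xi_{r+1,1} < p^e \cdot \xi_{r,1}$, or more crudely that the indices $j \in J_{r+1,0}$ correspond to $\xi_{r,j}$ lying in a controlled range. The upshot I want is that any such $j$ satisfies either $j < s$ (so $\mu_{r,j} \ge \mu_{r,0} + me \ge \mu_{r,0}$ by the third hypothesis, using $m \ge 0$), or $j \ge s$, in which case I invoke the periodicity relation~\eqref{eq:muperiodicity}.

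The core of the argument is the case $j \ge s$. Writing $j = qs + i'$ with $0 \le i' < s$ and $q \ge 1$, Equation~\eqref{eq:muperiodicity} gives $\mu_{r,j} = \mu_{r,i'} + q\big((\nu-\nu_0)p^r + \nu_0\big)$. Now $\mu_{r,i'} \ge \mu_{r,0}$ (either trivially if $i'=0$, or by the third hypothesis if $i' \ge 1$, again using $m \ge 0$ so $me \ge 0$), and the second hypothesis $p^r(\nu-\nu_0) + \nu_0 \ge me \ge 0$ makes the correction term nonnegative. Hence $\mu_{r,j} \ge \mu_{r,0}$, so indeed $\mu_{r+1,0} = \mu_{r,0}$. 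The factor $me$ in the second and third hypotheses (rather than just $0$) is there to make the induction self-sustaining: I must also check that the hypotheses of the lemma, once satisfied at level $r$, remain satisfied at level $r+1$. The first hypothesis only gets easier as $r$ grows. For the second, $p^{r+1}(\nu-\nu_0) + \nu_0 \ge p^r(\nu-\nu_0)+\nu_0 \ge me$ when $\nu \ge \nu_0$. For the third, I would compare $\um_{r+1}$ to $\um_r$ via~\eqref{eq:recmutropical}: each entry $\mu_{r+1,i}$ is $w_{r+1}(\xi_{r+1,i}) + \min_{j \in J_{r+1,i}}\mu_{r,j}$, and bounding $w_{r+1}(\xi_{r+1,i}) \ge -me$ (crudely, $w_{r+1} \ge -m$ times the number of sign changes, which over one period is at most $e \cdot m$ — this is where the $e$ enters) together with the level-$r$ gap hypothesis yields $\mu_{r+1,i} \ge \mu_{r+1,0} + me$ again, perhaps after also using that $\min_{j \in J_{r+1,i}}\mu_{r,j} \ge \mu_{r,0} - \text{(bounded)}$ shifted appropriately.

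Then the lemma follows by a straightforward induction on $r' \ge r$: the base case is trivial, and the inductive step is exactly the one-step stability proved above, whose hypotheses propagate. I expect the main obstacle to be the bookkeeping in the first bullet point — making precise how large $r$ must be so that the set $J_{r+1,0}$ of "new" subdivision indices is confined to the range where either the third hypothesis applies or the periodicity~\eqref{eq:muperiodicity} with a \emph{nonnegative} shift applies. This requires a clean statement about the growth of $\xi_{r,1}$ in terms of the period $e$ of the $p$-adic expansions and the magnitude of the $1-\gamma_i$, essentially a quantitative version of the remark preceding Lemma~\ref{lem:vpak} that $\xi_{r,1} \ge c\,p^r$. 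Everything else is combining the three inequalities with $m \ge 0$, $\nu \ge \nu_0$, and the coarse bound $-m \le w_r \le n$ on one period.
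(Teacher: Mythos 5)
Your one-step argument that $\mu_{r+1,0}=\mu_{r,0}$ is correct, and you correctly extend the third hypothesis from $1\le i<s$ to all $i\ge1$ via \eqref{eq:muperiodicity} and the second hypothesis. The problem is in propagating the third hypothesis, and it is a genuine gap, not just bookkeeping.

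Going from $r$ to $r+1$, the correct bound on the zigzag term is $w_{r+1}(\xi_{r+1,i})\ge -m$ on $[0,p^{r+1})$, \emph{not} $\ge -me$; your proposed justification (``$-m$ times the number of sign changes, at most $e\cdot m$'') is not a coherent statement about $w_{r+1}$, and in any case $-me$ is a \emph{weaker} lower bound than $-m$, so it cannot help. With the true bound, \eqref{eq:recmu} only yields $\mu_{r+1,i}\ge \mu_{r,0}+m(e-1)=\mu_{r+1,0}+m(e-1)$. Thus the gap of $me$ is \emph{not} preserved one step at a time; it shrinks by $m$ per step. Your induction as written does not close.

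The missing idea is to advance $e$ steps at once. Over $r+1,\dots,r+e$ the gap loses at most $me$ in total. But the first hypothesis together with Lemma~\ref{lem:periodicexpansion} guarantees that the digits of $1-\gamma$ from position $r$ onward are periodic of period $e$ and the $\xi$-ordering is the same at levels $r$ and $r+e$, so each $\xi_{r+e,i}$ ($i<s$) corresponds to the same parameter as $\xi_{r,i}$ and hence $\xi_{r+e,i}\ge\xi_{r,i}+p^r$. Through the periodicity~\eqref{eq:sigmarperiodic}/\eqref{eq:muperiodicity} this translation contributes an extra $(\nu-\nu_0)p^r+\nu_0\ge me$ (the second hypothesis), which exactly cancels the $me$ lost and yields $\mu_{r+e,i}\ge\mu_{r+e,0}+me$. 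So all three hypotheses re-hold at level $r+e$ and the induction runs in increments of $e$. This is precisely where the first bullet and the quantity $e$ are used, and it is what your plan does not capture: the first hypothesis is not merely controlling where $\xi_{r+1,1}$ sits, it is supplying the periodic boost that makes the gap self-sustaining across a block of $e$ levels.
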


The proof of \cref{lem:halt} uses the following result.

\begin{lemma}
\label{lem:periodicexpansion}
Let $x \in \Z_{(p)}$ and let $x = \sum_{r \geq 0} x_r p^r$ be its
$p$-adic expansion, with $x_r \in \{0, 1, \ldots, p{-}1\}$ for all
$r$.
Let $e$ be the multiplicative order of $p$ modulo the denominator
of $x$. Then $x_{r+e} = x_r$ for all $r > e + \max(0, \log_p |x|)$.
\end{lemma}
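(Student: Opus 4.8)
The plan is to prove Lemma~\ref{lem:periodicexpansion} by relating the $p$-adic expansion of $x \in \Z_{(p)}$ to the base-$p$ expansion of a suitable nonnegative rational number and then invoking the classical fact that the base-$p$ expansion of a rational number is eventually periodic with period dividing $e$, the multiplicative order of $p$ modulo the denominator. The subtlety is purely about \emph{when} the periodicity starts, and making the bound $r > e + \max(0, \log_p|x|)$ precise.

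First I would reduce to the case $x > 0$. Write $x = a/b$ in lowest terms, so $p \nmid b$ and $e$ is the multiplicative order of $p$ mod $b$. If $x$ is a nonnegative integer its expansion is finite (all $x_r = 0$ eventually), and the claim is immediate once $p^r > |x|$, so assume $x$ is not a nonnegative integer. The tails of the $p$-adic expansion satisfy: for each $r$, the ``remainder'' $y_r \coloneq \sum_{i \geq r} x_i p^{i-r} = p^{-r}(x - \sum_{i<r} x_i p^i)$ lies in $\Z_{(p)}$ and is congruent mod~$p^r$ considerations; more usefully, $y_r$ is itself of the form $a_r/b$ with $0 \le a_r < b$ once $r$ is large enough that the integer part has been ``used up''. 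Concretely, $x - \lfloor x \rfloor \in [0,1)$ has denominator dividing $b$, and its base-$p$ digits after the point are exactly $x_0, x_1, x_2, \dots$ shifted; so the periodicity of $(x_r)$ is the periodicity of the base-$p$ expansion of the fractional part $\{x\}$, which has eventual period dividing $e$.

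The key quantitative step is to pin down the onset of periodicity. The digits $x_0, x_1, \dots$ of $\{x\} = c/b$ (with $0 \le c < b$, $\gcd(c,b)$ possibly nontrivial but $p \nmid b$) are produced by the standard ``multiply by $p$ and take integer part'' algorithm: set $z_0 = c$, and $z_{r+1} = p z_r - b \lfloor p z_r / b\rfloor$, i.e.\ $z_{r+1} \equiv p z_r \pmod b$ with $0 \le z_{r+1} < b$. Since $p^e \equiv 1 \pmod b$, we get $z_{r+e} \equiv z_r \pmod b$, and as both lie in $[0,b)$ they are \emph{equal} — for every $r \ge 0$ in the fractional-part picture — hence $x_{r+e} = x_r$ for all $r$ once we are in the regime where the fractional part governs the digits. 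That regime is reached as soon as $\sum_{i<r} x_i p^i > \lfloor x\rfloor$, equivalently once $p^r$ exceeds the integer part, i.e.\ for $r > \log_p|x|$ (using $\lfloor x \rfloor \le |x|$); the extra $+e$ in the stated bound gives comfortable slack and also absorbs the degenerate case $\lfloor x\rfloor = 0$ where one only needs $r \ge 0$. I would phrase this cleanly by carrying out the division algorithm for the \emph{whole} number $x$ (not just its fractional part): write $x = q_r p^r + (x_0 + \cdots + x_{r-1}p^{r-1})$ with $q_r \in \Z_{(p)}$, observe $q_r \to$ a periodic regime because $q_r$ eventually lies in $(0, 1)$ with denominator dividing $b$, and then $q_r$ and $q_{r+e}$ are forced equal by the mod-$b$ congruence together with the interval constraint.

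The main obstacle, and the only place care is genuinely required, is the interplay between the (possibly negative or large) integer part of $x$ and the threshold: one must check that once $p^r > |\lfloor x\rfloor|$ the recursion for the tails really is the clean mod-$b$ recursion with values in $[0,b)$, so that $p^e \equiv 1$ forces exact equality rather than mere congruence. Everything else — the existence and uniqueness of the $p$-adic expansion, that $x \in \Z_{(p)}$ has eventually periodic digits, the triangle-type estimates on sizes — is standard. I expect the write-up to be short: set up the tail recursion, note the mod-$b$ period-$e$ structure, and verify the threshold $r > e + \max(0,\log_p|x|)$ suffices for the constraint ``tail lies in $[0,b)$'' to hold, at which point equality (not just congruence) of $z_r$ and $z_{r+e}$ is automatic.
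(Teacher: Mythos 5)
There is a genuine conceptual gap: you identify the $p$-adic digit sequence $(x_r)$ of $x$ with the base-$p$ (archimedean) digits of the fractional part $\{x\}$, and your tail recursion $z_{r+1} \equiv p\,z_r \pmod b$ with $z_r \in [0,b)$ is the recursion for those real decimal digits --- but these are different objects. For instance with $p=7$ and $x=1/3$, the $7$-adic expansion is $5 + 4\cdot 7 + 4\cdot 7^2 + \cdots$ (digits $5,4,4,\dots$), while the base-$7$ decimal of $1/3$ has digits $2,2,2,\dots$. The correct recursion for the $p$-adic tails $y_r = p^{-r}\bigl(x - \sum_{i<r}x_i p^i\bigr)$, writing $y_r = c_r/b$, is $c_{r+1} \equiv p^{-1}c_r \pmod b$, and the interval that is actually invariant under this map is $(-1,0]$, not $[0,1)$ (as the example shows: $y_1 = y_2 = \cdots = -2/3$). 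Since $p^{-1}$ has the same multiplicative order $e$ as $p$, the eventual-periodicity conclusion survives the fix, but the lemma's whole content is the quantitative threshold, and your analysis of when the tails enter the invariant window is carried out for the wrong map and the wrong interval. Two further gaps: the opening ``reduce to $x>0$'' is not justified --- for $p$-adic expansions, negative numbers have infinite expansions not obtained trivially from $|x|$ --- and the step ``the regime is reached once $p^r$ exceeds the integer part'' is not established: from the window $(0,1)$ the iterates can remain positive (in shrinking intervals $(0,p^{-k})$) for several more steps before crossing into $(-1,0]$, so one still needs to bound how many extra steps this takes.

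The paper sidesteps all of this by avoiding tail dynamics. It sets $d = p^e - 1$ so that $dx \in \Z$, writes $x = a + b/d$ with $a\in\Z$ and $0\le b<d$, observes that $\pm b/d$ has a \emph{purely} periodic $p$-adic expansion of period $e$ (its period block being the base-$p$ digits of $b$), and then bounds the carry propagation caused by adding the integer $a$: the carry travels at most $e$ positions beyond the $\sim\log_p|a|$ digits of $a$, because a full period block cannot consist entirely of the digit $p{-}1$. The case $a<-1$ is handled by replacing $x$ with $-1-x$, which flips each digit to $p{-}1{-}x_r$ and hence preserves all periodicity data. This decomposition-plus-carry argument delivers the threshold $r > e + \max(0,\log_p|x|)$ directly and with no delicate iteration bound. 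If you want to keep a tails-based proof you would need to (i) use the correct recursion $c_{r+1}\equiv p^{-1}c_r \pmod b$ and the interval $(-1,0]$, (ii) treat $x<0$ explicitly, and (iii) prove an actual bound on the number of steps needed for $y_r$ to enter $(-1,0]$ starting from $y_0=x$.
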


\begin{proof}
Set $d \coloneq p^e - 1$.
It follows from the definition of $e$ that $dx$ is an integer.
Therefore we can write $x = a + \frac b d$ with $a, b \in \Z$
and $0 \leq b < d$.
If $b_0, \ldots, b_{e-1}$ are the digits in base $p$ of $b$, the
$p$-adic expansion of $\frac b d$ is
$\sum_{i=0}^\infty b_{i \bmod e} \cdot p^i$.
Hence, it is periodic (from the start) of period $e$.

We first assume that $a \geq 0$. Then its $p$-adic expansion is
finite and has $1 + \lfloor \log_p a \rfloor$ digits. Moreover,
while performing the addition $a + \frac b d$, the last carry
can move at most by $e$ digits given that $b_0, \ldots, b_{e-1}$
cannot be all equal to $p{-}1$. The lemma follows in this case.

The case $a = -1$ is similar by writing down the subtraction.

Finally, if $a < -1$, we replace $x$ by $-1{-}x$. This has the 
effect of replacing every digit $x_r$ of $x$ by $p{-}1 - x_r$,
which does not change the periodicity properties. So, we are
back to the case $a \geq 0$.
\end{proof}

\begin{proof}[Proof of Lemma~\ref{lem:halt}]
Given Equation~\eqref{eq:muperiodicity}, 
the second and third requirements together imply that
$\mu_{r,i} \geq \mu_{r,0} + me$ for all $i \geq 1$.
Therefore, using Equation~\eqref{eq:recmu}, we find
$\mu_{r+1,0} = \mu_{r,0}$ (since $0 \in J_{r+1,0}$) 
and $\mu_{r+1,i} \geq \mu_{r,0} + m(e{-}1)$
for all $i \geq 1$ (since $w_{r+1}(\xi_{r+1}, i) 
\geq -m$ for all $i$).
Repeating the same argument, we obtain by induction on~$r'$ that,
$\mu_{r',0} = \mu_{r,0}$ and
$$\forall i \geq 1, \quad
\mu_{r',i} \geq \mu_{r,i} + m(e+r-r').$$
for $r' \in \{r{+}1, \ldots, r{+}e\}$.
In order to continue the induction beyond $r{+}e$, we observe
that the first assumption and \cref{lem:periodicexpansion} ensure 
that the values $1{-}\gamma \bmod p^r$, for $\gamma \in \Gamma$, are sorted 
in the same way as $1{-}\gamma \bmod p^{r+e}$.
Thus, the values $\xi_{i,r}$ and $\xi_{i,r+e}$ correspond
to the same parameter for all $i$. 
We deduce that $\xi_{r+e,i} \geq \xi_{r,i} + p^r$ for 
$i < s$.
Applying Equation~\eqref{eq:recmu} with $r{+}1, \ldots, r{+}e$
as before, we find
\begin{align*}
\mu_{r+e,i} 
 & \geq \mu_{r,i} + (\nu - \nu_0) p^r + \nu_0 - me \\
 & \geq \mu_{r,0} + me = \mu_{r+e, 0} + me.
\end{align*}
Therefore, the requirements of the lemma are also fulfilled
with $r$ replaced by $r{+}e$, and the induction can go on.
\end{proof}

When $\nu > \nu_0$, it follows from \cref{prop:padicradius}
that the $\mu_{r,i}$ with $i > 0$ grow at least linearly in 
$p^r$. The condition of \cref{lem:halt} will then be rapidly
fulfilled.

On the contrary, when $\nu = \nu_0$, we rely on the following
lemma.

\begin{lemma}
\label{lem:Trperiodic}
We assume $\nu = \nu_0$ and let $r_0$ be the smallest integer
greater than
$e + \log_p \max \big\{1, 
  |1{-}\gamma_1|, \ldots, |1{-}\gamma_{s-1}|\big\}.$
Then the sequence $(T_r)_{r \geq r_0}$ is periodic of period $e$.
\end{lemma}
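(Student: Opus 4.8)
The plan is to pin down precisely which data the matrix $T_r$ depends on, and then to check that each of those data is left unchanged when $r$ is replaced by $r{+}e$, as soon as $r\ge r_0$. Reading Equation~\eqref{eq:recmu} together with the folding~\eqref{eq:muperiodicity} of indices back into $\{0,\dots,s{-}1\}$, one sees that $T_r$ is assembled from two pieces: its \emph{combinatorial skeleton} — the blocks $J_{r,i}$ read modulo $s$, together with the multiples of $\nu_0$ that decorate its finite entries — and the vector $\big(w_r(\xi_{r,i})\big)_{0\le i<s}$ of values carried by those entries. Hence it suffices to prove that both the skeleton and this vector are periodic of period $e$ for $r\ge r_0$.

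For the skeleton, I would make explicit the description recorded just before Equation~\eqref{eq:recmu}: using the convention $\xi_{r-1,j+s}=\xi_{r-1,j}+p^{r-1}$, one has $\xi_{r,i}=\xi_{r-1,j_{r,i}}$ with $j_{r,i}$ equal to the rank of $\gamma$ among the reductions $1{-}\gamma'\bmod p^{r-1}$ plus $s$ times the $(r{-}1)$-st $p$-adic digit of $1{-}\gamma$, where $\gamma\in\Gamma$ is the parameter whose reduction occupies position $i$ at level $r$. Consequently the skeleton is a function of the ordering of the reductions $1{-}\gamma\bmod p^{r}$, the ordering of the reductions $1{-}\gamma\bmod p^{r-1}$, and the $(r{-}1)$-st $p$-adic digits of the $1{-}\gamma$ for $\gamma\in\Gamma$. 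For the vector of values, I would start from the Legendre-type identity $w_r(k)=\big|\{i:(-\alpha_i)\bmod p^r<k\}\big|-\big|\{j:(-\beta_j)\bmod p^r<k\}\big|$, valid for $0\le k\le p^r$ and obtained at once by summing the defining recurrence of $w_r$. Since the choice of $r_0$ forces $\val_p(1{-}\gamma)<r_0$ for every $\gamma\in\Gamma\setminus\{1\}$ (indeed $\val_p(1{-}\gamma)\le\log_p(d\,|1{-}\gamma|)<e+\log_p\max\{1,|1{-}\gamma_1|,\dots,|1{-}\gamma_{s-1}|\}<r_0$, using $d\mid p^e{-}1$), no such parameter reduces to $0$ modulo $p^r$, so $(-\gamma)\bmod p^r=\big((1{-}\gamma)\bmod p^r\big)-1$ whenever $\gamma\neq1$. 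Comparing this with $\xi_{r,i}$ and using that the reductions $1{-}\gamma\bmod p^r$ are then pairwise distinct, one finds $w_r(\xi_{r,i})=\big|\{i':\alpha_{i'}\neq1,\,(1{-}\alpha_{i'})\bmod p^r\le\xi_{r,i}\}\big|-\big|\{j':\beta_{j'}\neq1,\,(1{-}\beta_{j'})\bmod p^r\le\xi_{r,i}\}\big|$, so the vector too depends only on the ordering of the reductions $1{-}\gamma\bmod p^r$.

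It then only remains to rerun the argument already used in the proof of \cref{lem:halt}: by \cref{lem:periodicexpansion}, the hypothesis $r\ge r_0>e+\log_p\max\{1,|1{-}\gamma_1|,\dots,|1{-}\gamma_{s-1}|\}$ makes the $p$-adic digits of every $1{-}\gamma$ periodic of period $e$ in the range that matters here, and this forces the ordering of the reductions $1{-}\gamma\bmod p^{r}$ to agree with that of the $1{-}\gamma\bmod p^{r+e}$, the two orderings one level down to agree likewise, and the $(r{-}1)$-st digits of the $1{-}\gamma$ to coincide with their $(r{+}e{-}1)$-st digits. Substituting these coincidences into the two descriptions of the previous paragraph yields $T_{r+e}=T_r$ for every $r\ge r_0$, which is the assertion. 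The step I expect to cost the most care is the bookkeeping in the middle paragraph: keeping track of the off-by-one between the reductions $1{-}\gamma\bmod p^r$ that index the breakpoints $\xi_{r,i}$ and the reductions $-\gamma\bmod p^r$ that govern the jumps of $w_r$, treating a parameter equal to $1$ separately, and — when turning the recurrence~\eqref{eq:recmu} into a genuine $s\times s$ matrix over $\mathcal T$ — recording correctly which residues modulo $s$ are met by each block $J_{r,i}$ and with which multiple of $\nu_0$.
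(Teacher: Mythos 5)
Your proof is correct and takes essentially the same approach as the paper's: both invoke \cref{lem:periodicexpansion} to deduce that the sorted order of the reductions $(1{-}\gamma)\bmod p^r$ (together with the associated digit data) becomes $e$-periodic for $r\geq r_0$, from which $w_{r+e}(\xi_{r+e,i})=w_r(\xi_{r,i})$, $J_{r+e,i}=J_{r,i}$, and hence $T_{r+e}=T_r$ follow via Equation~\eqref{eq:recmu}. Your version spells out what the paper leaves implicit — the Legendre-type closed form for $w_r(\xi_{r,i})$ obtained by summing the defining recurrence, the bound $\val_p(1{-}\gamma)<r_0$, and the dependence of the blocks $J_{r,i}$ on the level-$(r{-}1)$ ordering and $(r{-}1)$-st digits — but the underlying mechanism is identical.
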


\begin{proof}
Let $r \geq r_0$. We have seen in the proof of
\cref{lem:halt} that $\xi_{r,i}$ and 
$\xi_{r+e,i}$ correspond to the same (top or bottom) 
parameter for all $i$.
It follows that 
$w_{r+e}(\xi_{r+e,i}) = w_r(\xi_{r,i})$ and
$J_{r+e, i} = J_{r,i}$ for all $i$. Therefore $T_{r+e} = T_r$
by Equation~\eqref{eq:recmu}.
\end{proof}

We set
$T \coloneq T_{r_0+1} \odot T_{r_0+2} \odot \cdots \odot T_{r_0+e}
  \in M_s(\mathcal T)$.
Then $\um_{r_0 + \ell e} = \um_{r_0} \odot T^{\odot \ell}$ for all $\ell \geq 0$.
Noticing in addition that the sequence $(\mu_{r,0})_{r \geq 0}$ is
nonincreasing, we find that $\nuval(h(x))$
is the first coordinate of $\um_{r_0} T^+$ where
\begin{equation}
\label{eq:weakclosure}
T^+ \coloneq T + T^{\odot 2} + T^{\odot 3} + \cdots
\end{equation}
is the so-called \emph{weak transitive closure} of $T$
(see \cite[\S 1.6.2]{But10}).
The latter can be efficiently computed using the Floyd--Warshall algorithm
(see \cite[Algorithm~1.6.21]{But10}), which then completes our algorithm.

We underline nonetheless that the sum of Equation~\eqref{eq:weakclosure} 
may diverge. This is however not an issue; indeed, this case is detected 
by the Floyd--Warshall algorithm and it corresponds to the situation
where $\nuval(h(x)) = -\infty$.
Hence, we can conclude in all cases.

\theoremstyle{acmdefinition}
\newtheorem{remark}[theorem]{\sc Remark}
\begin{remark}
\label{rem:smallest}
We can adapt the previous algorithm so that it computes in addition
the smallest integer $k$ such that $\val_p(h_k) + \nu k = \nuval(h(x))$.
For doing this, we compute the integers
$$k_{r,i} \coloneq
  \min \big\{ k \in I_{r,i} : \sigma_r(h_k) = \mu_{r,i} \big\}$$
at the same time as the $\mu_{r,i}$
while running the algorithm. We recover $k$ at the end of the computation
using $k = \lim_{r \to \infty} k_{r,0}$.
\end{remark}

\subsubsection{An example}
\label{sssec:example}

We consider the parameters $\ua = \big(\frac 1 3, \frac 4 3\big)$ and
$\ub = \big(\frac 2 3, 1\big)$, set $h(x)=\mathcal H (\ua, \ub; x)$
and want to compute $v_{p,0}(h(x))$.

We start with $p = 7$. We then have $\nu_0 = 0$ and we take $\nu = 0$
as well.
Given that $p \equiv 1 \pmod 3$, we find that the
reductions of the $1{-}\alpha_i$ and $1{-}\beta_j$ modulo $p^r$ are
$$\textstyle
\xi_{r,0} = 0, \quad
\xi_{r,1} = \frac{p^r - 1} 3, \quad  
\xi_{r,2} = \frac{p^r + 2} 3, \quad  
\xi_{r,3} = \frac{2p^r + 1} 3$$      
corresponding to the parameters $1$, $\frac 4 3$, $\frac 1 3$ and $\frac 2 3$
respectively. The zigzag function $w_r$ is $p^r$-periodic and 
\begin{itemize}
\item it takes the value $0$ on the interval $[0, \xi_{r,1})$, 
\item it takes the value $1$ on the interval $[\xi_{r,1}, \xi_{r,2})$,
\item it takes the value $2$ on the interval $[\xi_{r,2}, \xi_{r,3})$,
\item it takes the value $1$ on the interval $[\xi_{r,3}, p^r)$.
\end{itemize}
\begin{figure*}
\begin{tikzpicture}[scale=0.3]
\draw[thin, black!50] (0,0)--(49.5,0);
\draw[dotted, black!50] (49.5,0)--(50.5,0);
\node[left, scale=0.9] at (-1,0) { $\xi_{r, i}$: };
\fill (0,0) circle (1.5mm);
\node[scale=0.7] at (0,-1) { $0$ };
\fill (2,0) circle (1.5mm);
\fill (3,0) circle (1.5mm);
\fill (5,0) circle (1.5mm);
\begin{scope}[xshift=7cm]
\fill (0,0) circle (1.5mm);
\node[scale=0.7] at (0,-1) { $p^{r}$ };
\fill (2,0) circle (1.5mm);
\fill (3,0) circle (1.5mm);
\fill (5,0) circle (1.5mm);
\end{scope}
\begin{scope}[xshift=14cm]
\fill (0,0) circle (1.5mm);
\node[scale=0.7] at (0,-1) { $2p^{r}$ };
\fill (2,0) circle (1.5mm);
\fill (3,0) circle (1.5mm);
\fill (5,0) circle (1.5mm);
\end{scope}
\begin{scope}[xshift=21cm]
\fill (0,0) circle (1.5mm);
\node[scale=0.7] at (0,-1) { $3p^{r}$ };
\fill (2,0) circle (1.5mm);
\fill (3,0) circle (1.5mm);
\fill (5,0) circle (1.5mm);
\end{scope}
\begin{scope}[xshift=28cm]
\fill (0,0) circle (1.5mm);
\node[scale=0.7] at (0,-1) { $4p^{r}$ };
\fill (2,0) circle (1.5mm);
\fill (3,0) circle (1.5mm);
\fill (5,0) circle (1.5mm);
\end{scope}
\begin{scope}[xshift=35cm]
\fill (0,0) circle (1.5mm);
\node[scale=0.7] at (0,-1) { $5p^{r}$ };
\fill (2,0) circle (1.5mm);
\fill (3,0) circle (1.5mm);
\fill (5,0) circle (1.5mm);
\end{scope}
\begin{scope}[xshift=42cm]
\fill (0,0) circle (1.5mm);
\node[scale=0.7] at (0,-1) { $6p^{r}$ };
\fill (2,0) circle (1.5mm);
\fill (3,0) circle (1.5mm);
\fill (5,0) circle (1.5mm);
\end{scope}
\fill (49,0) circle (1.5mm);
\node[scale=0.7] at (49,-1) { $p^{r+1}$ };
\draw[thin, black!50] (0,-2)--(49.5,-2);
\draw[dotted, black!50] (49.5,-2)--(50.5,-2);
\node[left, scale=0.9] at (-1,-2) { $\xi_{r+1, i}$: };
\fill (0,-2) circle (1.5mm);
\fill (16,-2) circle (1.5mm);
\fill (17,-2) circle (1.5mm);
\fill (33,-2) circle (1.5mm);
\fill (49,-2) circle (1.5mm);
\draw (-0.2,0.5)--(0,1)--(15.3,1)--(15.5,0.5);
\node[scale=0.8] at (8,2) { $J_{r,0} = \{0, 1, \ldots, 8\}$ };
\draw (15.8,0.5)--(16,1)--(16.3,1)--(16.5,0.5);
\node[scale=0.8] at (16.5,2) { $J_{r,1} = \{9\}$ };
\draw (16.8,0.5)--(17,1)--(32.3,1)--(32.5,0.5);
\node[scale=0.8] at (25,2) { $J_{r,2} = \{10, 11, \ldots, 18\}$ };
\draw (32.8,0.5)--(33,1)--(48.3,1)--(48.5,0.5);
\node[scale=0.8] at (41,2) { $J_{r,3} = \{19, 20, \ldots, 27\}$ };
\end{tikzpicture}
\caption{\rm 
The sets $J_{r,i}$ for $\ua = \big(\frac 1 3, \frac 4 3\big)$,
$\ub = \big(\frac 2 3, 1\big)$ and $p = 7$}
\label{fig:Jri}
\end{figure*}
From this, we infer the $J_{r,i}$ (see Figure~\ref{fig:Jri}) and 
the matrices $T_r \in M_4(\mathcal T)$; they are independent of $r$
and given by:
$$T_r = \left(\begin{matrix}
0 & +\infty & 2 & 1 \\
0 & 1 & 2 & 1 \\
0 & +\infty & 2 & 1 \\
0 & +\infty & 2 & 1 
\end{matrix}\right).$$
The Floyd--Warshall algorithm gives $T^+ = T_r$.
We finally compute $\um_1 = (0, 1, 2, 1)$ and conclude that $\zeroval(h(x))$
is the first coordinate of the vector $\um_1 \odot T^+ = (0, 2, 2, 1)$,
\emph{i.e.} $\zeroval(h(x)) = 0$.

We now take $p = 11$ and continue with $\nu = 0$.
In this case, the ``$\xi$-ordering'' depends on the parity of $r$.
Precisely, if $r$ is even,
$$\textstyle
\xi_{r,0} = 0, \quad
\xi_{r,1} = \frac{p^r - 1} 3, \quad  
\xi_{r,2} = \frac{p^r + 2} 3, \quad  
\xi_{r,3} = \frac{2p^r + 1} 3$$      
corresponding to the parameters $1$, $\frac 4 3$, $\frac 1 3$ and $\frac 2 3$
respectively as before. However, if $r$ is odd, we have
$$\textstyle
\xi_{r,0} = 0, \quad
\xi_{r,1} = \frac{p^r + 1} 3, \quad   
\xi_{r,2} = \frac{2p^r - 1} 3, \quad  
\xi_{r,3} = \frac{2p^r + 2} 3$$       
corresponding to the parameters $1$, $\frac 2 3$, $\frac 4 3$, $\frac 1 3$
in this order.
Therefore, the matrix $T_r$ also depends on the parity of $r$; a calculation
gives
$$T_{2r'} = \left(\begin{matrix}
0 & +\infty & 2 & 1 \\
0 & +\infty & 2 & 1 \\
0 & 1 & 2 & 1 \\
0 & +\infty & 2 & 1 
\end{matrix}\right)
\quad ; \quad
T_{2r'+1} = \left(\begin{matrix}
0 & -1 & +\infty & 1 \\
0 & -1 & 0 & 1 \\
0 & -1 & +\infty & 1 \\
0 & -1 & +\infty & 1
\end{matrix}\right).$$
Hence
$$T = T_{2r'} \odot T_{2r'+1} =
\left(\begin{matrix}
0 & -1 & +\infty & 1 \\
0 & -1 & +\infty & 1 \\
0 & -1 & 1 & 1 \\
0 & -1 & +\infty & 1
\end{matrix}\right).$$
In this case, the sum of Equation~\eqref{eq:weakclosure} defining 
$T^+$ does not converge (this is due to the coefficient $-1$ on
the diagonal) and so, we conclude that $\zeroval(h(x)) = -\infty$.

In \S~\ref{ssec:red} we will see that all primes essentially follow the
pattern displayed for either $p=7$ or $p=11$.

\subsubsection{Complexity}

Our algorithm manipulates rational numbers whose size can
significantly grow during the execution. For this reason, we 
will estimate its complexity by counting bit operations.

In what follows,
we use the standard soft-$O$ notation $\softO(-)$ to hide
logarithmic factors. We also assume that FFT-based algorithms
are used to perform multiplications on integers; an operation
on two integers of $B$ bits then requires at most $\softO(B)$
bit operations.

\begin{proposition}
Let $r_0$ be defined as in \cref{lem:Trperiodic} and let 
$D$ be a common denominator of $\nu$ and $\nu_0$.
Then, the algorithm described in 
\S\S \ref{sssec:recurrence}--\ref{sssec:halting} performs at most
\begin{itemize}
\item Case $\nu = \nu_0$: \\
$\softO\big((n{+}m)^2 r_0 \log p + (n{+}m)^3 \log (r_0 p)\big)$ \smallskip 
\item Case $\nu > \nu_0$: \\
$\softO\big((n{+}m)^2 (r_0 \log p + \log D) \big(r_0 \log p - \log \min(1, \nu {-}\nu_0)\big)\big)$
\end{itemize}
bit operations.
\end{proposition}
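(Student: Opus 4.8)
The plan is to track the cost of the two phases of the algorithm — the computation of the matrices $T_r$ for $r \leq r_0$ (respectively $r \leq r_0 + e$ when $\nu = \nu_0$) together with the running products $\um_r = \um_{r-1} \odot T_r$, and then the closure/halting phase — and to add them. Throughout I will bound the bit-size of the rational numbers that occur: by Equation~\eqref{eq:muperiodicity} together with the recurrences \eqref{eq:recmu}--\eqref{eq:recmutropical}, every entry of $T_r$ and of $\um_r$ is, up to a bounded additive term, a multiple of $1/D$ of size at most $O(r \log p + \log D)$ bits, since the $w_r(\xi_{r,i})$ are integers bounded in absolute value by $n{+}m$ and the translations introduced are of the shape $(\nu-\nu_0)p^r + \nu_0$. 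The matrices are of size $s = O(n+m)$, so a tropical matrix product costs $O(s^3)$ arithmetic operations on such numbers, i.e. $\softO\big(s^3 (r \log p + \log D)\big)$ bit operations, and the vector–matrix product $\um_{r-1} \odot T_r$ costs $\softO\big(s^2 (r\log p + \log D)\big)$.

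First I would treat the case $\nu = \nu_0$. Here, by \cref{lem:Trperiodic}, the sequence $(T_r)_{r \geq r_0}$ is periodic of period $e$, and $e \leq r_0$ by the definition of $r_0$; so only the $r_0 + e = O(r_0)$ matrices $T_1, \ldots, T_{r_0 + e}$ need to be computed, each obtained in $\softO(s^2 \log p)$ bit operations once one knows the sorted reductions $\xi_{r,i}$ and the values $w_r(\xi_{r,i})$ (which themselves are read off from the $p$-adic expansions of the $1{-}\gamma$, truncated at $r_0 + e = O(r_0)$ digits, costing $\softO(s\, r_0 \log p)$ in total; the bottleneck inside each step is sorting $s$ integers of $O(r \log p)$ bits with $r \leq r_0$). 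Since in this case $D = 1$ and every $\mu_{r,i}$ is an integer bounded by $O(r_0 \log p)$ bits, computing all the running products up to $r_0 + e$ costs $\softO(s^2 r_0 \log p)$. The closure phase then forms $T = T_{r_0+1} \odot \cdots \odot T_{r_0+e}$ in $e{-}1 \leq r_0$ products of cost $\softO(s^3 \log(r_0 p))$ each — the size bound $\log(r_0 p)$ coming from the fact that these entries are integers of $O(r_0 \log p)$ bits — followed by one Floyd–Warshall computation of $T^+$ on a matrix of size $s$, again $\softO(s^3 \log(r_0 p))$. Adding the two phases yields $\softO\big(s^2 r_0 \log p + s^3 \log(r_0 p)\big)$, which is the claimed bound with $s = O(n+m)$.

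Next, the case $\nu > \nu_0$. The remark after \cref{lem:halt} says that the $\mu_{r,i}$ with $i > 0$ grow at least linearly in $p^r$; combined with the halting conditions of \cref{lem:halt} — whose second clause asks $p^r(\nu-\nu_0)+\nu_0 \geq me$, forcing $p^r \gtrsim me/(\nu-\nu_0)$, i.e. $r \gtrsim e + \log_p m - \log_p(\nu-\nu_0)$ — one sees the algorithm halts at some $r^* = O\big(r_0 + \log_p m - \log_p \min(1,\nu-\nu_0)\big)$; absorbing $\log_p m$ into $r_0$-type terms (since $m \leq s$, $\log m = O(\log s)$ is a logarithmic factor hidden by $\softO$) gives $r^* = \softO\big(r_0 - \log_p\min(1,\nu-\nu_0)\big)$, so $r^* \log p = \softO\big(r_0 \log p - \log\min(1,\nu-\nu_0)\big)$. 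For each $r \leq r^*$, computing $T_r$ and updating $\um_r$ costs $\softO\big(s^2(r\log p + \log D)\big)$ since the entries now have denominator dividing $D$ and size $O(r\log p + \log D)$; summing over $r \leq r^*$ and using that $r^* = \softO\big(r_0 - \log_p\min(1,\nu-\nu_0)\big)$ replaces one factor $r$ by $r^*$ and leaves the other bounded by $r^*\log p + \log D = \softO\big(r_0\log p + \log D - \log\min(1,\nu-\nu_0)\big)$, producing exactly the stated product $\softO\big(s^2(r_0\log p + \log D)(r_0 \log p - \log\min(1,\nu-\nu_0))\big)$. No closure computation is needed here, so this phase dominates.

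I expect the main obstacle to be the bookkeeping of bit-sizes rather than any conceptual difficulty: one must verify carefully that the rational numbers produced by the iterated recurrences \eqref{eq:recmu}--\eqref{eq:muperiodicity} do not blow up beyond $O(r\log p + \log D)$ bits — in particular that the translations $(\nu-\nu_0)p^r+\nu_0$ accumulated through \eqref{eq:sigmarperiodic} stay controlled and that no spurious denominators appear — and that the number of iterations in the case $\nu > \nu_0$ is genuinely governed by the halting criterion of \cref{lem:halt} and not by a slower phenomenon. Once these size and termination bounds are in hand, the complexity is just the number of iterations times the per-iteration cost of tropical linear algebra on $s \times s$ matrices, plus, in the case $\nu=\nu_0$, the Floyd–Warshall step, and the announced estimates follow by elementary summation.
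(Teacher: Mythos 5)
Your high-level plan (bound per-iteration cost times number of iterations, plus closure) matches the paper, but your bit-size analysis in the case $\nu = \nu_0$ is wrong in a way that makes the final estimate not follow. You assert that ``every $\mu_{r,i}$ is an integer bounded by $O(r_0 \log p)$ bits'', and that $D = 1$. Both are false. Since $\nu_0 = \frac{m-n}{p-1}$, the entries are rationals with denominator dividing $p-1$ (so $D$ can be as large as $p-1$), not integers. More importantly, your generic size bound ``$O(r\log p + \log D)$ bits'' is a gross overestimate here: the term $(\nu-\nu_0)p^r$ that drives this bound \emph{vanishes} when $\nu = \nu_0$. The key observation the paper uses — and which your proposal misses — is that for $\nu = \nu_0$ the entries of $T_r$ have magnitude $O(n+m)$ and the entries of $\um_r$ have magnitude $O((n+m)\,r)$, i.e.\ they grow linearly in $r$, not exponentially. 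Hence the bit size is only $\softO(\log(rp))$, and summing $\softO((n+m)^2\log(rp))$ over $r \leq r_0 + e$ gives the claimed $\softO((n+m)^2 r_0 \log p)$. Plugging your stated size of $O(r_0 \log p)$ bits into the same summation instead yields $\softO((n+m)^2 r_0^2 \log p)$, a factor $r_0$ worse than the statement. So the announced complexity does not follow from your estimate; the missing step is precisely the $O((n+m)r)$ magnitude bound, obtained from the fact that $\sigma_r(k) = \nu_0 k + \sum_{s\le r} w_s(k)$ is, after cancellation of the $\nu_0 k$ drift against the linear growth of the zigzags, a sum of $r$ terms each bounded by $n+m$.

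In the case $\nu > \nu_0$ your approach is closer to the paper but the concluding step is asserted rather than verified. You bound the bit size at step $r$ by $O(r\log p + \log D)$ (where the paper instead uses the halting criterion to deduce $p^R = \softO(wp^{r_0}/(\nu-\nu_0) + p^{2r_0})$ and hence a uniform $\softO(r_0\log p + \log D)$ size for \emph{all} $r\le R$), and then claim without justification that summing ``produces exactly the stated product''. With your estimate the total is $\softO\big((n+m)^2\, r^*\,(r^*\log p + \log D)\big)$, which is not visibly the same as the statement; it does reduce to the claimed bound, but only after one notices that $D$ being a common denominator of $\nu$ and $\nu_0$ forces $\nu - \nu_0 \geq 1/D$, hence $-\log\min(1,\nu-\nu_0) \leq \log D$, so that the factor $r^*\log p + \log D$ is $\softO(r_0\log p + \log D)$ and $r^*$ is $\softO\big((r_0\log p - \log\min(1,\nu-\nu_0))/\log p\big)$. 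That arithmetic is missing from your write-up. Your estimate of the halting index $r^* = \softO(r_0 - \log_p\min(1,\nu-\nu_0))$ is in fact slightly sharper than the paper's displayed final line for $R$ and is essentially correct, but you should derive the lower bound on $\mu_{r,1}$ from the proof of Proposition~\ref{prop:padicradius} as the paper does, rather than relying on the informal remark that the $\mu_{r,i}$ ``grow linearly in $p^r$''.
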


\begin{proof}
For simplicity, we set $w = n + m$.

We first observe that, throughout the algorithm, the computer only 
manipulates rational numbers in $\frac 1 D \Z$.
Besides, the finite entries of the matrix $T_r$ are all in
$O\big(w + (\nu - \nu_0)p^r\big)$.

To start with, we assume that $\nu = \nu_0$. In this case, 
$D$ is a divisor of $p{-}1$, so $D \leq p$. We need
to compute $\um_r$ until $r = r_0 + e = O(r_0)$.
For a fixed $r$, computing $\um_r$ from $\um_{r-1}$ requires 
$O(s^2) \subset O(w^2)$ operations on rational numbers of order 
of magnitude $O(wr)$.
This can be done within $\softO(w^2 \log (rp))$ bit operations. In
total, computing all the requires $\mu_r$ then costs
$\softO(w^2 r_0 \log p)$ bit operations.
Finally, applying the Floyd--Warshall requires $O(s^3)$ additional
operations on rational numbers in $O(w r_0)$; this costs at most
$\softO(w^3 \log(r_0 p))$ bit operations.

We now move to the case where $\nu > \nu_0$.
Let $R$ be the first integer for which the requirements of
\cref{lem:halt} are fulfilled.
It follows from the proof of \cref{lem:halt} that $\xi_{r,1}
\geq p^{r - r_0}$ for all $r$. 
Following the proof of Proposition~\ref{prop:padicradius}, we deduce that
$$\mu_{r,1} \geq (\nu - \nu_0) p^{r-r_0} - w r + O(p^{r_0})$$
which in turn implies that
$p^R = \softO\Big(\frac{w p^{r_0}}{\nu - \nu_0} + p^{2 r_0}\Big)$.

Hence the finite entries of the matrices $T_r$ with $r \leq R$
are all in $\softO(w p^{r_0} + p^{2 r_0})$. As in the first part of the 
proof, we deduce that the computations of $\um_1, \ldots, \um_R$ 
requires at most
$$\softO\big(w^2 R (r_0 \log p + \log D)\big)$$
bit operations. We conclude the proof by noticing that

\medskip

\hfill$\textstyle R = O\big(\log w + r_0 \log p - \log \min(1, \nu{-}\nu_0)\big)$.\hfill
\end{proof}

\subsection{Newton polygons}
\label{ssec:NP}

The \emph{Newton polygon} $\NP(h)$ of the series $h(x)$ is, by 
definition, the topological closure of the convex hull in $\R^2$ 
of the points of coordinates $(k, v)$ with $v \geq \val_p(h_k)$.
The Newton polygon is closely related to the drifted valuations;
indeed, for $\nu \geq \nu_0$, we have
$$\nuval(h(x)) = \min_{(a, b) \in \NP(h)} a\nu + b.$$
To model these relations, we introduce two new tropical semirings:
\begin{enumerate}[leftmargin=1.5em]
\item the set $\mathcal F$ of concave functions $f : [\nu_0, +\infty) \to
\R \sqcup \{+\infty\}$; we endow $\mathcal F$ with the pointwise
operations $\oplus = \min$ and $\odot = +$
\item the set $\mathcal N$ of closed convex subsets $N \subset \R^2$ 
which are stable under the translations by $(0,1)$ and $(1, \nu_0)$,
endowed with
\begin{align*}
N_1 \oplus N_2 & = \textrm{convex hull of } N_1 \cup N_2, \\
N_1 \odot N_2 & = N_1 + N_2 \textrm{ (Minkowski sum).}
\end{align*}
\end{enumerate}
We have a map $\ev : \mathcal N \to \mathcal F$ that takes $N$ to
the function 
$$\textstyle \ev(N) : \nu \mapsto \min_{(a, b) \in N} a\nu + b.$$
One checks that $\ev$ is a morphism of semirings. Moreover, the
theorem of Hahn--Banach implies that it is an isomorphism, its
inverse being given by the map that takes $f \in \mathcal F$ to
the subset of $\mathbb R^2$ consisting of pairs $(a,b)$ such that
$f(\nu) \leq a \nu + b$.
A decisive advantage of $\mathcal N$ (over $\mathcal F$) is that
it provides concrete algorithmic perspectives, given that handling
operations in $\mathcal N$ is effectively doable (at least when 
the convex sets are described by finite amounts of data).

\subsubsection{Recurrence over $\mathcal N$}

We are now in position to design a uniform version (with
respect to $\nu$) of the algorithm of Subsection~\ref{ssec:val};
roughly speaking, it simply consists in replacing $\mathcal T$ by 
$\mathcal N$ everywhere. Precisely, we define the $\mu_{r,i} \in \mathcal N$
by the relations
\begin{align*}
\mu_{1,i} 
 & = \textstyle \bigoplus_{\xi \in I_{1,i}} \ev^{-1}\big(\nu \mapsto \xi\nu + w_1(\xi)\big), \\
\mu_{r,i} 
 & = \textstyle \ev^{-1}\big(\nu \mapsto w_r(\xi_{r,i})\big) \odot \Big(\bigoplus_{j \in J_{r,i}} \mu_{r-1,j}\Big).
\end{align*}
We notice in addition that $\ev^{-1}$ of the affine function $\nu 
\mapsto a \nu + b$ is simply the convex subset of $\mathbb R^2$ with 
one vertex at $(a, b)$ and two rays in the directions $(0,1)$ and
$(1,\nu_0)$.
In particular, multiplying by $\ev^{-1}(\nu \mapsto w_r(\xi_{r,i}))$ is 
nothing but translating the convex set by the vector $(0, w_r(\xi_{r,i}))$.

As in Subsection~\ref{ssec:val}, the $\mu_{r,i}$ are subject to 
a recurrence relation with respect to $i$, which reads
$$\mu_{r,i+s} = \mu_{r,i} \odot 
  \varepsilon^{-1}\big(\nu \mapsto (\nu - \nu_0) p^r + \nu_0\big)$$
(compare with Equation~\eqref{eq:muperiodicity}).
This allows to only retain the $s$ first terms of the
sequences $(\mu_{r,i})_{i \geq 0}$: setting
$\um_r = (\mu_{r,0}, \ldots, \mu_{r,s{-}1})$, we have a relation of 
the form $\um_r = \um_{r-1} \odot T_r$ where $T_r$ is now a square 
matrix of size $s$ with entries in $\mathcal N$.

Finally, the Newton polygon of $h(x)$ is obtained as the first
coordinate of
\begin{equation}
\label{eq:NP}
\textstyle
  \bigoplus_{\ell \geq 1} \um_1 \odot T_2 \odot \cdots \odot T_\ell.
\end{equation}
When $\val_{p,\mu_0}(h(x))$ is finite, it follows from what we
did in Subsection~\ref{ssec:val} and \cite[Proposition~1.6.15]{But10}
that the above infinite sum can be truncated to $\ell \leq r_0 + s$
(with the notation introduced at the end of Subsection~\ref{ssec:val})
without changing the final result. We then get a complete algorithm
to compute $\NP(h)$.

On the contrary, when $\val_{p,\mu_0}(h(x)) = -\infty$, the sum
of Equation~\eqref{eq:NP} converges but it cannot be reduced to a
finite sum: each additional term provides more and more accurate
approximations of $\NP(h)$.
This is perfectly in line with the fact that $\NP(h)$
has an infinite number of slopes in this case.
An option to nevertheless obtain a meaningful result
is to shrink a bit the domain of definition of the functions and
replace $\nu_0$ by another bound $\nu_1 > \nu_0$. Doing this, the
sum of Equation~\eqref{eq:NP} again reduces to a finite sum, and we
can decide when the computation can be stopped using the condition
of \cref{lem:halt}; indeed, inequalities of functions correspond to
inclusions of Newton polygons and so, they can easily be checked.

\subsubsection{An example}

We continue the example of Subsection~\ref{sssec:example}.
We start with $p = 7$.
In this case, finding the Newton polygon is easy. Indeed, we 
already know that $\zeroval(h(x)) = 0$. Besides, we derive from
Proposition~\ref{prop:padicradius} that $\val_p(h_k) = o(k)$.
This readily implies $\NP(h) = \R^+ \times \R^+$.

Let us nonetheless apply our algorithm and show that it outputs
the same result. In our case, the vector $\um_1$ and the matrix
$T_r \in M_4(\mathcal N)$ are the following ones:
$$\begin{array}{r@{\hspace{0.5em}}l}
\um_1 = & 
\left(
\raisebox{-0.5\height+0.2em}{%
\begin{tikzpicture}
\draw[transparent] (-0.5, 0)--(3.5, 0);
\begin{scope}[gray]
\draw (0.5, -0.5)--(0.5, 0.5);
\draw (1.5, -0.5)--(1.5, 0.5);
\draw (2.5, -0.5)--(2.5, 0.5);
\end{scope}

\begin{scope}[xshift=-0.2cm, yshift=-0.2cm, scale=0.06]
\fill[black!20] (0,0) rectangle (10,10);
\draw[thick] (10,0)--(0,0)--(0,10);
\node[below, scale=0.5] at (-1,0) { $(0,0)$ };
\end{scope}
\begin{scope}[xshift=0.8cm, yshift=-0.2cm, scale=0.06]
\fill[black!20] (2,1) rectangle (10,10);
\draw[thick] (10,1)--(2,1)--(2,10);
\node[below, scale=0.5] at (2,1) { $(2,1)$ };
\end{scope}
\begin{scope}[xshift=1.8cm, yshift=-0.2cm, scale=0.06]
\fill[black!20] (3,2) rectangle (10,10);
\draw[thick] (10,2)--(3,2)--(3,10);
\node[below, scale=0.5] at (2,2) { $(3,2)$ };
\end{scope}
\begin{scope}[xshift=2.8cm, yshift=-0.2cm, scale=0.06]
\fill[black!20] (5,1) rectangle (10,10);
\draw[thick] (10,1)--(5,1)--(5,10);
\node[below, scale=0.5] at (4,1) { $(5,1)$ };
\end{scope}
\end{tikzpicture}} \right),
\medskip \\
T_r = & \left(
\raisebox{-0.5\height+0.2em}{%
\begin{tikzpicture}
\begin{scope}[gray]
\draw (-0.5, 0.5)--(3.5, 0.5);
\draw (-0.5, 1.5)--(3.5, 1.5);
\draw (-0.5, 2.5)--(3.5, 2.5);
\draw (0.5, -0.5)--(0.5, 3.5);
\draw (1.5, -0.5)--(1.5, 3.5);
\draw (2.5, -0.5)--(2.5, 3.5);
\end{scope}

\begin{scope}[xshift=-0.2cm, yshift=2.8cm, scale=0.06]
\fill[black!20] (0,0) rectangle (10,10);
\draw[thick] (10,0)--(0,0)--(0,10);
\node[below, scale=0.5] at (-1,0) { $(0,0)$ };
\end{scope}
\begin{scope}[xshift=-0.2cm, yshift=1.8cm, scale=0.06]
\fill[black!20] (0,0) rectangle (10,10);
\draw[thick] (10,0)--(0,0)--(0,10);
\node[below, scale=0.5] at (-1,0.2) { $(0,0)$ };
\end{scope}
\begin{scope}[xshift=-0.2cm, yshift=0.8cm, scale=0.06]
\fill[black!20] (0,0) rectangle (10,10);
\draw[thick] (10,0)--(0,0)--(0,10);
\node[below, scale=0.5] at (-1,0.2) { $(0,0)$ };
\end{scope}
\begin{scope}[xshift=-0.2cm, yshift=-0.2cm, scale=0.06]
\fill[black!20] (0,0) rectangle (10,10);
\draw[thick] (10,0)--(0,0)--(0,10);
\node[below, scale=0.5] at (-1,0.2) { $(0,0)$ };
\end{scope}

\begin{scope}[xshift=0.8cm, yshift=1.8cm, scale=0.06]
\fill[black!20] (2,1) rectangle (10,10);
\draw[thick] (10,1)--(2,1)--(2,10);
\node[below, scale=0.5] at (1,1) { $(2p^{r-1},1)$ };
\end{scope}

\begin{scope}[xshift=1.8cm, yshift=2.8cm, scale=0.06]
\fill[black!20] (3,2) rectangle (10,10);
\draw[thick] (10,2)--(3,2)--(3,10);
\node[below, scale=0.5] at (2,2) { $(3p^{r-1},2)$ };
\end{scope}
\begin{scope}[xshift=1.8cm, yshift=1.8cm, scale=0.06]
\fill[black!20] (3,2) rectangle (10,10);
\draw[thick] (10,2)--(3,2)--(3,10);
\node[below, scale=0.5] at (2,2) { $(3p^{r-1},2)$ };
\end{scope}
\begin{scope}[xshift=1.8cm, yshift=0.8cm, scale=0.06]
\fill[black!20] (2,2) rectangle (10,10);
\draw[thick] (10,2)--(2,2)--(2,10);
\node[below, scale=0.5] at (1,2) { $(2p^{r-1},2)$ };
\end{scope}
\begin{scope}[xshift=1.8cm, yshift=-0.2cm, scale=0.06]
\fill[black!20] (2,2) rectangle (10,10);
\draw[thick] (10,2)--(2,2)--(2,10);
\node[below, scale=0.5] at (1,2) { $(2p^{r-1},2)$ };
\end{scope}

\begin{scope}[xshift=2.8cm, yshift=2.8cm, scale=0.06]
\fill[black!20] (5,1) rectangle (10,10);
\draw[thick] (10,1)--(5,1)--(5,10);
\node[below, scale=0.5] at (4,1) { $(5p^{r-1},1)$ };
\end{scope}
\begin{scope}[xshift=2.8cm, yshift=1.8cm, scale=0.06]
\fill[black!20] (5,1) rectangle (10,10);
\draw[thick] (10,1)--(5,1)--(5,10);
\node[below, scale=0.5] at (4,1) { $(5p^{r-1},1)$ };
\end{scope}
\begin{scope}[xshift=2.8cm, yshift=0.8cm, scale=0.06]
\fill[black!20] (5,1) rectangle (10,10);
\draw[thick] (10,1)--(5,1)--(5,10);
\node[below, scale=0.5] at (4,1) { $(5p^{r-1},1)$ };
\end{scope}
\begin{scope}[xshift=2.8cm, yshift=-0.2cm, scale=0.06]
\fill[black!20] (4,1) rectangle (10,10);
\draw[thick] (10,1)--(4,1)--(4,10);
\node[below, scale=0.5] at (3,1) { $(4p^{r-1},1)$ };
\end{scope}

\end{tikzpicture}}\right).
\end{array}$$

\noindent
We recall that the Newton polygon we are looking for is the
limit of the first coordinate of Equation~\eqref{eq:NP} when
$r$ goes to infinity. In our case, we observe that all the
entries of $\um_1$ and $T_r$ are subsets of $\R^+ \times \R^+$.
Therefore, for all $r \geq 2$, the four coordinates of $\um_1
\odot T_2 \odot \cdots \odot T_r$ are also subsets of
$\R^+ \times \R^+$. We conclude that $\NP(h) = \R^+ \times
\R^+$ as expected.

When $p = 11$, on the contrary, we are in the situation where
$\val_{p,0}(h(x)) = -\infty$, meaning that the limit of
Equation~\eqref{eq:NP} is not reached at finite level. In this
case, the matrices $T_r$ are those shown on Figure~\ref{fig:Tr} 
and we can see that the entries of the second column of $T_r$ 
are not subsets of $\R^+ \times \R^+$ when $r$ is odd.
What happens more precisely is that each summand in
Equation~\eqref{eq:NP} corresponding to an odd number $\ell = 
2r'-1$ introduces a new vertex $V_{r'}$ with $y$-coordinate equal 
to $-r'$. This vertex then propagates to the first coordinate 
and eventually contributes to the Newton polygon.

As explained in the description of the algorithm, we can avoid
this by slightly degrading the precision and adding a ray in 
the direction $(0, \nu_1)$ (with $\nu_1 > 0$ fixed) to all
entries of $\um_1$ and $T_r$.
This will absorb the vertices $V_{r'}$ for $r'$ large enough.

\begin{remark}
Carrying out all computations, one finds that the vertices of
$\NP(h)$ are the points
$\Big(4 \frac{p^{2r'} - 1}{p^2 - 1}, -r'\Big)$ with $r' \in \N$.
In this particular example, they then exhibit very strong
regularity patterns. One may wonder whether this is a general 
phenomenon and, if it is, if we can use it to compute $\NP(h)$ 
and/or to accelerate the computation of $\nuval(h(x))$ when 
$\nu > \nu_0$.
\end{remark}

\begin{figure*}
\centering
$T_r = \left(
\raisebox{-0.5\height+0.2em}{%
\begin{tikzpicture}
\begin{scope}[gray]
\draw (-0.5, 0.5)--(3.5, 0.5);
\draw (-0.5, 1.5)--(3.5, 1.5);
\draw (-0.5, 2.5)--(3.5, 2.5);
\draw (0.5, -0.5)--(0.5, 3.5);
\draw (1.5, -0.5)--(1.5, 3.5);
\draw (2.5, -0.5)--(2.5, 3.5);
\end{scope}

\begin{scope}[xshift=-0.2cm, yshift=2.8cm, scale=0.05]
\fill[black!20] (0,0) rectangle (12,12);
\draw[thick] (12,0)--(0,0)--(0,12);
\node[below, scale=0.5] at (-1,0) { $(0,0)$ };
\end{scope}
\begin{scope}[xshift=-0.2cm, yshift=1.8cm, scale=0.05]
\fill[black!20] (0,0) rectangle (12,12);
\draw[thick] (12,0)--(0,0)--(0,12);
\node[below, scale=0.5] at (-1,0.2) { $(0,0)$ };
\end{scope}
\begin{scope}[xshift=-0.2cm, yshift=0.8cm, scale=0.05]
\fill[black!20] (0,0) rectangle (12,12);
\draw[thick] (12,0)--(0,0)--(0,12);
\node[below, scale=0.5] at (-1,0.2) { $(0,0)$ };
\end{scope}
\begin{scope}[xshift=-0.2cm, yshift=-0.2cm, scale=0.05]
\fill[black!20] (0,0) rectangle (12,12);
\draw[thick] (12,0)--(0,0)--(0,12);
\node[below, scale=0.5] at (-1,0.2) { $(0,0)$ };
\end{scope}

\begin{scope}[xshift=0.8cm, yshift=0.8cm, scale=0.05]
\fill[black!20] (3,1) rectangle (12,12);
\draw[thick] (12,1)--(3,1)--(3,12);
\node[below, scale=0.5] at (2,1) { $(3p^{r-1},1)$ };
\end{scope}

\begin{scope}[xshift=1.8cm, yshift=2.8cm, scale=0.05]
\fill[black!20] (4,2) rectangle (12,12);
\draw[thick] (12,2)--(4,2)--(4,12);
\node[below, scale=0.5] at (3,2) { $(4p^{r-1},2)$ };
\end{scope}
\begin{scope}[xshift=1.8cm, yshift=1.8cm, scale=0.05]
\fill[black!20] (4,2) rectangle (12,12);
\draw[thick] (12,2)--(4,2)--(4,12);
\node[below, scale=0.5] at (3,2) { $(4p^{r-1},2)$ };
\end{scope}
\begin{scope}[xshift=1.8cm, yshift=0.8cm, scale=0.05]
\fill[black!20] (4,2) rectangle (12,12);
\draw[thick] (12,2)--(4,2)--(4,12);
\node[below, scale=0.5] at (3,2) { $(4p^{r-1},2)$ };
\end{scope}
\begin{scope}[xshift=1.8cm, yshift=-0.2cm, scale=0.05]
\fill[black!20] (3,2) rectangle (12,12);
\draw[thick] (12,2)--(3,2)--(3,12);
\node[below, scale=0.5] at (2,2) { $(3p^{r-1},2)$ };
\end{scope}

\begin{scope}[xshift=2.8cm, yshift=2.8cm, scale=0.05]
\fill[black!20] (8,1) rectangle (12,12);
\draw[thick] (12,1)--(8,1)--(8,12);
\node[below, scale=0.5] at (7,1) { $(8p^{r-1},1)$ };
\end{scope}
\begin{scope}[xshift=2.8cm, yshift=1.8cm, scale=0.05]
\fill[black!20] (7,1) rectangle (12,12);
\draw[thick] (12,1)--(7,1)--(7,12);
\node[below, scale=0.5] at (6,1) { $(7p^{r-1},1)$ };
\end{scope}
\begin{scope}[xshift=2.8cm, yshift=0.8cm, scale=0.05]
\fill[black!20] (7,1) rectangle (12,12);
\draw[thick] (12,1)--(7,1)--(7,12);
\node[below, scale=0.5] at (6,1) { $(7p^{r-1},1)$ };
\end{scope}
\begin{scope}[xshift=2.8cm, yshift=-0.2cm, scale=0.05]
\fill[black!20] (7,1) rectangle (12,12);
\draw[thick] (12,1)--(7,1)--(7,12);
\node[below, scale=0.5] at (6,1) { $(7p^{r-1},1)$ };
\end{scope}

\end{tikzpicture}}\right)$
\quad ($r$ even)
\qquad ; \qquad
$T_r = \left(
\raisebox{-0.5\height+0.2em}{%
\begin{tikzpicture}
\begin{scope}[gray]
\draw (-0.5, 0.5)--(3.5, 0.5);
\draw (-0.5, 1.5)--(3.5, 1.5);
\draw (-0.5, 2.5)--(3.5, 2.5);
\draw (0.5, -0.5)--(0.5, 3.5);
\draw (1.5, -0.5)--(1.5, 3.5);
\draw (2.5, -0.5)--(2.5, 3.5);
\end{scope}

\begin{scope}[xshift=-0.2cm, yshift=2.8cm, scale=0.05]
\fill[black!20] (0,0) rectangle (12,12);
\draw[thick] (12,0)--(0,0)--(0,12);
\node[below, scale=0.5] at (-1,0) { $(0,0)$ };
\end{scope}
\begin{scope}[xshift=-0.2cm, yshift=1.8cm, scale=0.05]
\fill[black!20] (0,0) rectangle (12,12);
\draw[thick] (12,0)--(0,0)--(0,12);
\node[below, scale=0.5] at (-1,0.2) { $(0,0)$ };
\end{scope}
\begin{scope}[xshift=-0.2cm, yshift=0.8cm, scale=0.05]
\fill[black!20] (0,0) rectangle (12,12);
\draw[thick] (12,0)--(0,0)--(0,12);
\node[below, scale=0.5] at (-1,0.2) { $(0,0)$ };
\end{scope}
\begin{scope}[xshift=-0.2cm, yshift=-0.2cm, scale=0.05]
\fill[black!20] (0,0) rectangle (12,12);
\draw[thick] (12,0)--(0,0)--(0,12);
\node[below, scale=0.5] at (-1,0.2) { $(0,0)$ };
\end{scope}

\begin{scope}[xshift=0.8cm, yshift=2.8cm, scale=0.05]
\fill[black!20] (4,-1) rectangle (12,12);
\draw[thick] (12,-1)--(4,-1)--(4,12);
\node[below, scale=0.5] at (3,-1) { $(4p^{r-1},-1)$ };
\end{scope}
\begin{scope}[xshift=0.8cm, yshift=1.8cm, scale=0.05]
\fill[black!20] (4,-1) rectangle (12,12);
\draw[thick] (12,-1)--(4,-1)--(4,12);
\node[below, scale=0.5] at (3,-1) { $(4p^{r-1},-1)$ };
\end{scope}
\begin{scope}[xshift=0.8cm, yshift=0.8cm, scale=0.05]
\fill[black!20] (4,-1) rectangle (12,12);
\draw[thick] (12,-1)--(4,-1)--(4,12);
\node[below, scale=0.5] at (3,-1) { $(4p^{r-1},-1)$ };
\end{scope}
\begin{scope}[xshift=0.8cm, yshift=-0.2cm, scale=0.05]
\fill[black!20] (3,-1) rectangle (12,12);
\draw[thick] (12,-1)--(3,-1)--(3,12);
\node[below, scale=0.5] at (2,-1) { $(3p^{r-1},-1)$ };
\end{scope}

\begin{scope}[xshift=1.8cm, yshift=1.8cm, scale=0.05]
\fill[black!20] (7,0) rectangle (12,12);
\draw[thick] (12,0)--(7,0)--(7,12);
\node[below, scale=0.5] at (6,0) { $(7p^{r-1},0)$ };
\end{scope}

\begin{scope}[xshift=2.8cm, yshift=2.8cm, scale=0.05]
\fill[black!20] (8,1) rectangle (12,12);
\draw[thick] (12,1)--(8,1)--(8,12);
\node[below, scale=0.5] at (7,1) { $(8p^{r-1},1)$ };
\end{scope}
\begin{scope}[xshift=2.8cm, yshift=1.8cm, scale=0.05]
\fill[black!20] (8,1) rectangle (12,12);
\draw[thick] (12,1)--(8,1)--(8,12);
\node[below, scale=0.5] at (7,1) { $(8p^{r-1},1)$ };
\end{scope}
\begin{scope}[xshift=2.8cm, yshift=0.8cm, scale=0.05]
\fill[black!20] (7,1) rectangle (12,12);
\draw[thick] (12,1)--(7,1)--(7,12);
\node[below, scale=0.5] at (6,1) { $(7p^{r-1},1)$ };
\end{scope}
\begin{scope}[xshift=2.8cm, yshift=-0.2cm, scale=0.05]
\fill[black!20] (7,1) rectangle (12,12);
\draw[thick] (12,1)--(7,1)--(7,12);
\node[below, scale=0.5] at (6,1) { $(7p^{r-1},1)$ };
\end{scope}

\end{tikzpicture}}\right)$
\quad ($r$ odd)

\caption{\rm The matrices $T_r$ for the hypergeometric series $\pFq{\big(\frac 1 3, \frac 4 3\big)}{\big(\frac 2 3\big)}{x}$ and $p = 11$}
\label{fig:Tr}
\end{figure*}

\subsection{Application to $p$-adic evaluation}
\label{ssec:padiceval}

Proposition~\ref{prop:padicradius} ensures that, if $\val_p(x) > 
\nu_0$, then $\val_p(h_k x^k)$ goes to infinity when $k$ grows, 
and so the hypergeometric series $\sum_k h_k x^k$ converges for
the $p$-adic topology. Hence, it defines a function
$h : B_{\nu_0} \to \Q_p$
where $\Q_p$ is the field of $p$-adic numbers and $B_{\nu_0}$ is
its open disc centered at $0$ of radius $p^{-\nu_0}$. In what
follows, we briefly outline an algorithm to compute $h(a)$ at 
precision $O(p^N)$ for given $a \in B_{\nu_0}$ and $N \in \Z$.

\begin{lemma}
\label{lem:truncation}
Let $\nu$ be a real number the interval $(\nu_0, \val_p(a))$.
Then $\val_k(h_k a^k) \geq N$ for all
$k \geq K \coloneq \frac{N - \nuval(h(x))}{\val_p(a) - \nu}$.
\end{lemma}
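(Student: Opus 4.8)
The statement is a routine estimate once the drifted valuation is available, so the plan is to bound $\val_p(h_k a^k)$ below by a quantity that is linear in $k$ with positive slope, then solve for the threshold. First I would write $\val_p(h_k a^k) = \val_p(h_k) + k\,\val_p(a)$. The definition of the $\nu$-drifted valuation gives, for every $k \geq 0$, the inequality $\val_p(h_k) + \nu k \geq \nuval(h(x))$, hence $\val_p(h_k) \geq \nuval(h(x)) - \nu k$. Substituting, I get
\[
\val_p(h_k a^k) \;\geq\; \nuval(h(x)) - \nu k + k\,\val_p(a)
 \;=\; \nuval(h(x)) + k\,(\val_p(a) - \nu).
\]

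**Why the hypothesis $\nu \in (\nu_0, \val_p(a))$ matters.** The condition $\nu < \val_p(a)$ makes the slope $\val_p(a) - \nu$ strictly positive, so the right-hand side is an increasing affine function of $k$; the condition $\nu > \nu_0$ guarantees (by the discussion following the definition of drifted valuations, which rests on \cref{prop:padicradius}) that $\nuval(h(x))$ is a finite rational number rather than $-\infty$, so the bound is meaningful. Requiring $\val_p(a) - \nu > 0$ is also what makes $K = \frac{N - \nuval(h(x))}{\val_p(a) - \nu}$ well-defined and finite.

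**Concluding.** For $k \geq K$, using the affine lower bound and the positivity of the slope,
\[
\val_p(h_k a^k) \;\geq\; \nuval(h(x)) + K\,(\val_p(a) - \nu)
 \;=\; \nuval(h(x)) + \big(N - \nuval(h(x))\big) \;=\; N,
\]
which is exactly the claim (the statement writes $\val_k(h_k a^k)$, evidently a typo for $\val_p$).

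**Main obstacle.** There is essentially no obstacle: the only subtlety is the legitimacy of invoking $\nuval(h(x)) \in \Q$, i.e. checking we are not in the $-\infty$ regime — and that is precisely guaranteed by the standing assumption $\nu > \nu_0$ together with \cref{prop:padicradius}. Everything else is a one-line substitution and a rearrangement, so I would keep the written proof to three or four lines.
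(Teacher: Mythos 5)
Your proof is correct and follows exactly the paper's argument: the paper also derives $\val_p(h_k a^k) \geq \nuval(h(x)) + k(\val_p(a)-\nu)$ from the definition of the drifted valuation and then solves for the threshold $K$. You simply spell out the intermediate rearrangement and the role of the two hypotheses, which the paper leaves implicit.
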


\begin{proof}
The lemma follows from the inequality
$$\val_p(h_k a^k) 
 \geq \nuval(h(x)) + k\cdot(\val_p(a) - \nu)$$
which is a direct consequence of the definition of $\nuval(h(x))$.
\end{proof}

To compute $h(a)$, we then proceed as follows:

\noindent
(1) we choose a rational number $\nu \in (\nu_0, \val_p(a))$,

\noindent
(2) we compute $\nuval(h(x))$ using the algorithm of \S\ref{ssec:val},

\noindent
(3) we compute the bound $K$ of Lemma~\ref{lem:truncation},

\noindent
(4) we output $\sum_{k<K} h_k a^k + O(p^N)$.

\medskip

Although the previous algorithm works for any value of $\nu$, bad
choices could lead to huge truncation bounds and so, dramatic
performances.
Nonetheless, we know from the proof of Proposition~\ref{prop:padicradius}
that the order of magnitude of $\nu_0 k - \val_p(h_k)$ is about
$\log_p (k)$.
Using this approximation and solving the corresponding optimization
problem, one finds the following heuristic for the choice of $\nu$:
$$\textstyle
\nu = \nu_0 + \frac{\val_p(a) - \nu_0}{N} \in (\nu_0, \val_p(a)).$$
In practice, this choice leads to $K$ close to 
$\frac N{\val_p(a) - \nu_0}$, which is basically the best we can hope
for.

\section{Reduction modulo primes}

We fix a prime number $p$ and we let
$\Z_{(p)}$ denote the subring of $\Q$ consisting of rational
numbers $x$ such that $\val_p(x) \geq 0$, \emph{i.e.} rational
numbers $\frac a b$ with $\gcd(b, p) = 1$.
Any element of $\Z_{(p)}$ can be reduced modulo $p$, yielding a
result in $\Fp \coloneq \Z/p\Z$.

It may happen for some parameters $\ua$, $\ub$ that \emph{all} the
coefficients $(h_k)_{k \geq 0}$ of $h(x) = \pFq{\ua}{\ub} x$
lie in $\Z_{(p)}$ or, equivalently, that $\zeroval(h(x)) \geq 0$.
In this case, we say that $h(x)$ has \emph{good reduction} modulo $p$
and we write $h(x) \bmod p$ for the image of $h(x)$ in $\Fp\ps x$.

%

\subsection{Good reduction primes}
\label{ssec:red}

We fix two tuples to parameters $\ua \in \Q^n$ and $\ub \in \Q^m$ 
and set
$$\textstyle
h(x) = \sum_{k \geq 0} h_k x^k \coloneq \pFq{\ua}{\ub} x.$$
Checking if $h(x)$ has good reduction at $p$ can be done using the 
algorithm of Subsection~\ref{ssec:val}: we compute $\zeroval(h(x))$ and 
look whether it is negative or not. However, describing explicitly 
the set $\mathcal P_h$ of all primes $p$ at which $h(x)$ has good 
reduction looks more challenging.
The aim of this subsection is to answer this question.

Let $d$ be the smallest common divisors of the parameters in $\ua$
and $\ub$. Since $d$ has of course only finitely many prime divisors,
it is easy to treat them separately. Hence, in what follows, we
shall always assume that $p$ does not divide $d$.

When $m > n$, we deduce from \cref{prop:padicradius}
that $\val_p(h_k)$ goes to
$-\infty$ when $k$ grows. Hence $h(x)$ cannot have good 
reduction in this case. From now on, we then assume that
$m \leq n$.

If $x$ is a real number, we denote by $\{x\}$ its decimal part,
that is, by definition, the unique real number in $[0,1)$ such
that $x - \{x\} \in \Z$.
We will need the following result, which is a slight reformulation
of~\cite[Lemma~4]{Chr86}.

\begin{lemma}[Christol]
\label{lem:christol}
Let $\gamma = \frac a d \in \Q$.
Let $q > |a{-}d|$ be an integer which is coprime with $d$ and
let $\Delta$ be the unique integer in $\{1, \ldots, d{-}1\}$
such that $\Delta q \equiv 1 \pmod d$.

Then the reduction of $1{-}\gamma$ modulo $q$ is
$(1{-}\gamma) + q{\cdot}\{\gamma \Delta\}$.
\end{lemma}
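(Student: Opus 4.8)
The plan is to work directly with the $p$-adic expansion of $\gamma = \frac a d$ and extract the reduction of $1-\gamma$ modulo $q$ from it, exactly as in \cref{lem:periodicexpansion}. Write $\gamma = \sum_{i \geq 0} \gamma_i p^i$ for its $p$-adic expansion; but in fact here the modulus is $q$, not a power of $p$, so instead I would argue $q$-adically in the trivial sense: since $\gcd(q,d)=1$, the element $\frac 1 d$ makes sense modulo $q$ and $1-\gamma \bmod q = (1-\gamma \cdot 1) \bmod q$ is well-defined. The first step is to identify what integer $(1-\gamma) \bmod q$ is, knowing that $1-\gamma$ is the rational number $\frac{d-a}{d}$.

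First I would reduce to computing $\frac 1 d \bmod q$. By definition of $\Delta$ we have $q \Delta \equiv 1 \pmod d$, so $q\Delta - 1 = d\ell$ for some integer $\ell$; equivalently $\Delta \equiv d^{-1} \cdot \frac{q\Delta-1}{\,\cdot\,}$... more cleanly: the condition $\Delta q \equiv 1 \pmod d$ says that $\frac{1-\Delta q}{d}$ is an integer, hence $\frac 1 d \equiv \Delta q \cdot \frac 1 d \cdot q^{-1}$... Let me instead phrase the key identity as follows. We want the unique integer $N \in \{0,\dots,q-1\}$ with $N \equiv \frac{d-a}{d} \pmod q$, i.e.\ $dN \equiv d-a \pmod q$. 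The claim is $N = (1-\gamma) + q\{\gamma\Delta\}$. To verify this I would check two things: (i) that the proposed value lies in the correct residue class mod $q$, and (ii) that it lies in the range $\{0,\dots,q-1\}$ (or rather in the range where "reduction" is being taken — note the statement allows $1-\gamma$ itself to be negative, so I should match Christol's convention precisely, presumably reduction into a window of length $q$ starting at $1-\gamma$, or simply that the two sides are congruent mod $q$ and the correction term $q\{\gamma\Delta\}$ is chosen to land in $[0,q)$). For (i): since $\{\gamma\Delta\} = \gamma\Delta - \lfloor\gamma\Delta\rfloor$ and $q\{\gamma\Delta\} = q\gamma\Delta - q\lfloor\gamma\Delta\rfloor$, the proposed value is $(1-\gamma) + q\gamma\Delta - q\lfloor\gamma\Delta\rfloor \equiv (1-\gamma) + q\gamma\Delta \pmod q$; but $q\gamma\Delta = \frac{a q \Delta}{d}$ and using $q\Delta \equiv 1\pmod d$ we get $\frac{aq\Delta}{d} \equiv \frac a d = \gamma \pmod{?}$ — here I need to be careful that $\frac{aq\Delta - a}{d}$ is an integer (true since $d \mid q\Delta-1$) and then that multiplying by $q$... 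Actually the cleaner route: $q\gamma\Delta - \gamma = \gamma(q\Delta-1) = \frac a d (q\Delta - 1)$, which is an integer, call it $t$; moreover $t = \gamma(q\Delta-1)$ so $t \equiv 0$ is not what I want — rather $(1-\gamma) + q\gamma\Delta = (1-\gamma) + \gamma + t = 1 + t$, an integer, and $1+t \equiv 1 + \gamma(q\Delta-1)$; hmm, I want this $\equiv (1-\gamma) \pmod q$ won't hold. Let me recompute: $(1-\gamma)+q\{\gamma\Delta\} \equiv (1-\gamma) + q\gamma\Delta \pmod{q}$ only if $q\lfloor\gamma\Delta\rfloor \equiv 0$, which it is. So I need $q\gamma\Delta \equiv 0 \pmod q$? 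No — $q\gamma\Delta$ need not be an integer. The right statement is that the whole expression $(1-\gamma)+q\{\gamma\Delta\}$ is an integer (since $q\{\gamma\Delta\} = q\gamma\Delta - q\lfloor\gamma\Delta\rfloor$ and $q\gamma\Delta - \gamma = t \in\Z$, so $q\{\gamma\Delta\} = \gamma + t - q\lfloor\gamma\Delta\rfloor$, giving $(1-\gamma)+q\{\gamma\Delta\} = 1 + t - q\lfloor\gamma\Delta\rfloor \in \Z$), and modulo $q$ this integer is $\equiv 1 + t \equiv 1 + \gamma(q\Delta-1) \equiv 1 - \gamma \pmod q$ since $\gamma q\Delta$ — wait $\gamma q \Delta$ is not an integer. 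OK: $1 + t = 1 + \gamma(q\Delta - 1) = 1 - \gamma + \gamma q \Delta$; and $\gamma q \Delta$, while not itself an integer, satisfies $\gamma q \Delta = \frac{aq\Delta}{d}$ and $1+t$ \emph{is} an integer, so $\gamma q\Delta = (1+t) - (1-\gamma) = t + \gamma$, consistent. To get the congruence mod $q$: $1+t \equiv ? \pmod q$. Write $t = \gamma(q\Delta-1)$; then $1 + t = 1 + \gamma q\Delta - \gamma$. Since $d\mid q\Delta - 1$, write $q\Delta - 1 = dk$, so $t = \gamma d k = ak$, hence $1 + t = 1 + ak$ and $q\Delta = 1 + dk$, i.e.\ $k = \frac{q\Delta-1}{d}$. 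Then $1 + t \equiv 1 + ak \pmod q$, and I want this $\equiv 1 - a d^{-1} \pmod q$, i.e.\ $ak \equiv -ad^{-1} \pmod q$, i.e.\ $k \equiv -d^{-1} \pmod q$ (when $\gcd(a,q)=1$; handle common factors separately), i.e.\ $dk \equiv -1 \pmod q$, i.e.\ $q\Delta - 1 \equiv -1 \pmod q$, i.e.\ $q\Delta \equiv 0\pmod q$ — true. So the congruence (i) holds.

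For (ii), the range condition: I would show $0 \leq (1-\gamma) + q\{\gamma\Delta\} < q$ given the hypothesis $q > |a-d|$, i.e.\ $q > |1-\gamma|\cdot d \geq$ ... more precisely $|1-\gamma| = \frac{|d-a|}{d} < \frac q d \leq q$; since $0 \leq \{\gamma\Delta\} < 1$ we have $0 \leq q\{\gamma\Delta\} \leq q-1 < q$ (as the expression is an integer, $q\{\gamma\Delta\}$ differs from an integer by $1-\gamma$, a bounded amount), and then $(1-\gamma) + q\{\gamma\Delta\}$ sits within $(-|1-\gamma|, q)$; combined with it being a nonnegative-looking quantity — here I would lean on the precise convention Christol uses for "reduction mod $q$" of a non-integer rational (likely the representative in the half-open interval determined so that the statement is exact), and simply note that the correction term $q\{\gamma\Delta\}$ is by construction the unique multiple of $q$ that, added to $1-\gamma$, lands in the target window, because $\{\gamma\Delta\}$ ranges over a full set of representatives of $\frac 1 d\Z / \Z$ as needed. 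I would close by citing \cite[Lemma~4]{Chr86} for the original formulation and noting ours is a cosmetic restatement.

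\textbf{Expected main obstacle.} The delicate point is (ii) — pinning down exactly which interval "reduction modulo $q$" refers to for a rational number that is not a $p$-adic (here $q$-adic) integer in the naive sense, and checking the bound $q > |a-d|$ is precisely what guarantees the claimed representative is the correct one with no off-by-$q$ error. The congruence (i) is a routine manipulation with $q\Delta \equiv 1 \pmod d$; the real care goes into the inequalities and matching Christol's normalization conventions, so that the formula $(1-\gamma) + q\{\gamma\Delta\}$ is not merely congruent but literally equal to the reduction as used downstream (in particular for the $\xi_{r,i}$ computations, where $q = p^r$).
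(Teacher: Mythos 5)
The paper itself gives no proof of this lemma: it simply cites \cite[Lemma~4]{Chr86} and calls the statement a cosmetic restatement, so there is no ``paper argument'' to compare against. Evaluating your proposal on its own, the congruence part (i) is essentially correct: write $q\Delta-1 = dk$ with $k\in\Z$, so that $(1-\gamma)+q\{\gamma\Delta\}=1+ak-q\lfloor\gamma\Delta\rfloor$ is an integer, and $dk\equiv -1\pmod q$ gives $ak\equiv -ad^{-1}\equiv -\gamma\pmod q$; the hedge about $\gcd(a,q)$ is unnecessary since one multiplies the congruence $k\equiv -d^{-1}$ by $a$ rather than dividing.

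The genuine gap is in the range argument (ii). You write that ``the correction term $q\{\gamma\Delta\}$ is by construction the unique multiple of $q$ that, added to $1-\gamma$, lands in the target window,'' but $q\{\gamma\Delta\}$ is \emph{not} a multiple of $q$: it equals $\frac{qt}{d}$ where $t \coloneqq a\Delta \bmod d$. Because of this, the crude bounds $0\le q\{\gamma\Delta\}<q$ and $|1-\gamma|<q$ do not pin the sum into $[0,q)$, and nothing you wrote forces nonnegativity of $(1-\gamma)+q\{\gamma\Delta\}$. The missing observation is quantitative: since $\Delta$ is invertible modulo $d$, when $\gamma\notin\Z$ (equivalently $d\ge 2$, which is forced since $\Delta\in\{1,\dots,d-1\}$ must exist) one has $t\in\{1,\dots,d-1\}$, hence
\[
\frac{q}{d}\;\le\;q\{\gamma\Delta\}\;\le\;\frac{q(d-1)}{d},
\]
and the hypothesis $q>|a-d|$ gives exactly $|1-\gamma|=\frac{|a-d|}{d}<\frac{q}{d}$. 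Adding, $0<(1-\gamma)+q\{\gamma\Delta\}<q$, which together with integrality and the congruence from (i) identifies it as the reduction. So the hypothesis $q>|a-d|$ is calibrated to match the gap $q/d$ left by the extreme values of $t$, not to bound $|1-\gamma|$ by $q$; without spelling this out the proof does not close.
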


Writing $\denom(x, y)$ for the smallest common denominator of
$x$ and $y$, we set
$$B(\ua,\ub) \coloneq 
\max_{\gamma, \gamma' \in \Gamma} \,\,
  \denom(\gamma, \gamma') {\cdot} |\gamma{-}\gamma'|$$
where we recall that $\Gamma = \{1, \alpha_1, \ldots, \alpha_n,
\beta_1, \ldots, \beta_m\}$.
\cref{lem:christol} implies that, for $p > B(\ua,\ub)$ and $r \geq 1$,
the ordering of the $1{-}\gamma \bmod p^r$, for $\gamma \in \Gamma$ only
depends on the congruence of $p$ modulo a common denominator $d$ of
the $\alpha_i$ and $\beta_j$.

\paragraph{Case $m = n$}

Here, in virtue of what we did previously, the vector $\um_1$ and
the matrices $T_r$ of Subsection~\ref{ssec:val}  
only depend on $p \bmod d$ provided that $p > B(\ua,\ub)$.
This is then also the case for $\zeroval(h(x))$ and, consequently,
for the fact that $h(x)$ has or has not good reduction modulo $p$.
In other words, $\mathcal P_h$ is the union of exceptional primes
up to the bound $B(\ua,\ub)$ and then, it consists of primes in
arithmetic progressions of ratio $d$.

These observations also readily give an algorithm to compute
$\mathcal P_h$: we check whether $h(x)$ has good or bad reduction
for all primes $p \leq B(\ua,\ub)$ and for one representative of
each invertible class modulo $d$.
We mention nonetheless that checking congruence classes can be
done more efficiently by using the criterion of 
\cite[Theorem~3.1.3]{CFV25}.

\paragraph{Case $m < n$}

In this case, the matrix $\um_1$ is again independent from $p$
as soon as $p > B(\ua,\ub)$.
However, the matrices $T_r$ are not. Following their construction, 
we nonetheless realize that they can be written as 
$$\textstyle 
T_r = U_r + \frac{p^{r-1}-1}{p-1} \cdot V_r$$
where $U_r$ and $V_r$ are independent from $p$ for $p > B(\ua,\ub)$.
Besides $U_r \geq -m$ (in the sense that all its entries are at
least $-m$) and $V_r \geq 0$. In particular $T_2$ does not depend
on $p$ as well and so neither does $\um_2 = \um_1 \odot T_2$.
Define $\tilde \um_r \coloneq \um_r \oplus \big(0, \ldots, 0\big)$.
By induction on $r$, we prove that $\um_r \geq -mr$ for all $r$,
which in turns implies that
$\tilde \um_r = \tilde \um_{r-1} \odot U_r$ if
$p^{r-1}{-}1 > mr(p{-}1)$.
We conclude that $\tilde \um_r$ only depends on 
$r \bmod d$ provided that $p > B(\ua,\ub)$ as before and 
$p^{r-1}{-}1 > mr(p{-}1)$ for all $r \geq 3$.
The latter condition is fulfilled as soon as $p \geq 2m$; indeed
from $p \geq 2m$, we derive $p^i \geq 2m$ for all $i$ and then
$$\textstyle
\frac{p^{r-1}-1}{p{-}1} = 
1 + p + \cdots + p^{r-2} \geq 1 + 2m(r{-}2) > mr
\quad (r \geq 3).$$
We now recall that the first coordinate of $\um_r$ tends to
$\zeroval(h(x))$ when $r$ goes to infinity. Therefore, the limit
of the first coordinate of $\tilde \um_r$ is 
$\zeroval(h(x)) \oplus 0= \min\big(\zeroval(h(x)), 0\big)$. The fact 
that $h(x)$ has good reduction at $p$ can then be read off on the
$\tilde \um_r$, and thus only depend on the congruence class of
$p$ modulo $d$ provided that $p > \max(B(\ua,\ub), 2m)$.

As a consequence, one can compute the set $\mathcal P_f$ by 
proceeding as in the case $m=n$, except that the bound $B(\ua,\ub)$
needs now to be replaced by $\max(B(\ua,\ub), 2m)$.

\subsection{Section operators}

From now on, we fix a prime number $p$ of good reduction for $h(x)$
and aim at studying the algebraic properties of $h(x) \bmod p$.

We let $n'$ (resp. $m'$) be the number of top (resp. bottom) parameters
which are in $\Z_{(p)}$ and let $v'$ (resp. $w'$) be the sum of the
valuations of the top (resp. bottom) parameters which are not in
$\Z_{(p)}$. Following what we did in Subsection~\ref{ssec:val},
we define
$\nu_0 \coloneq w' - v' + \frac{m' - n'}{p-1}$.

From Proposition~\ref{prop:padicradius}, we derive that
$\lim_{k \to \infty} \val_p(h_k)/ k = - \nu_0$. Hence,
our good reduction assumption ensures that $\nu_0 \leq 0$.
We also set
$\nu \coloneq (1-p)\nu_0 = (p-1)(w'- v') + (m' - n') \in \N$.

Key tools for studying $h(x) \bmod p$ are the section operators
that we introduce now.

\begin{definition}
Let $r$ be a nonnegative integer. The map
\[ \begin{array}{rrl}
    \Lambda_r:& \Z_{(p)}\ps x &\to \Z_{(p)}\ps x \smallskip \\
    & \sum_{k=0}^\infty a_k x^k &\mapsto \sum_{k=0}^\infty a_{kp+r} x^k
\end{array}\]
is called the $r$-th \emph{section operator}.
\end{definition}

We are going to prove that the $r$-th section of a hypergeometric series 
is closely related to a scalar multiple of another hypergeometric series.
Before proceeding, we need the two following definitions.

\begin{definition}
For $a, b \in \Q$, we say that $a$ is \emph{multiplicatively congruent}
to $b$ modulo $p$ and we write $a \equivm b \pmod p$ if $a = b = 0$,
or $b \neq 0$ and $\frac a b \in 1 + p \Z_{(p)}$.

\noindent
Similarly, if $f(x) = \sum_k a_k x^k$ and $g(x) = \sum_k
b_k x^k$ are two series with coefficients in $\Q$, we write
$f \equivm g \pmod p$ if $a_k \equivm b_k \pmod p$ for all $k$.
\end{definition}

\begin{definition}[Dwork map]
We define the map $\Dp : \Q \to \Q$ by:
\begin{itemize}
\item when $\gamma \in \Z_{(p)}$, $\Dp(\gamma)$ is the unique element of
$\Z_{(p)}$ such that $p\Dp(\gamma) - \gamma \in \{0, \ldots, p{-}1\}$,
\item when $\gamma \not\in \Z_{(p)}$, $\Dp(\gamma) \coloneqq \gamma$.
\end{itemize}
\end{definition}

\begin{proposition}
\label{prop:section}
For $r \geq 0$, we have
$$\Lambda_r(h(x)) \equivm h_r \cdot\pFq{\Dp(\ua{+}r)}{\Dp(\ub{+}r)} {(-p)^\nu x} \pmod p.$$
\end{proposition}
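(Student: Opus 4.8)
The plan is to compare coefficients on both sides directly, coefficient by coefficient. Writing $g(x) = \pFq{\Dp(\ua{+}r)}{\Dp(\ub{+}r)}{x}$, the coefficient of $x^k$ in $g((-p)^\nu x)$ is $(-p)^{\nu k} \prod_i (\Dp(\alpha_i{+}r))_k \big/ \prod_j (\Dp(\beta_j{+}r))_k$, while the coefficient of $x^k$ in $\Lambda_r(h(x))$ is $h_{kp+r}$. So the identity to prove is
$$h_{kp+r} \equivm h_r \cdot (-p)^{\nu k} \cdot \frac{\prod_{i=1}^n (\Dp(\alpha_i{+}r))_k}{\prod_{j=1}^m (\Dp(\beta_j{+}r))_k} \pmod p$$
for every $k \geq 0$. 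Since $h_{kp+r}/h_r$ is itself a ratio of Pochhammer symbols, namely $\prod_i (\alpha_i{+}r)_{kp}/\prod_j (\beta_j{+}r)_{kp}$, it suffices to establish the ``atomic'' comparison for a single parameter $\gamma \in \Q$: that $(\gamma{+}r)_{kp} / (\gamma{+}r)_r^{?}$ relates multiplicatively to $(\pm p)^{?}\,(\Dp(\gamma{+}r))_k$. More precisely, I would isolate and prove the key lemma: for $\gamma \in \Q$ and integers $r, k \geq 0$,
$$\frac{(\gamma)_{kp+r}}{(\gamma)_r} \equivm (-p)^{\delta k}\,\big(\Dp(\gamma{+}r)\big)_k \pmod p,$$
where $\delta = 1$ if $\gamma \in \Z_{(p)}$ and $\delta = \val_p(\gamma)\cdot(\text{something})$ otherwise — actually one should track the $p$-part separately. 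Then multiplying over the $\alpha_i$, dividing over the $\beta_j$, and collecting the powers of $-p$ produces exactly $(-p)^{\nu k}$ with $\nu = (p-1)(w'{-}v') + (m'{-}n')$ as defined, where the $(p-1)$ factor arises because each $p$-integral parameter contributes $p$ per block of $p$ consecutive factors but loses a $\Dp$-shift, and each non-$p$-integral parameter of valuation $-a$ contributes $p^{-a}$ per factor, i.e. $p^{-ap}$ over the block, whose valuation accumulates into $(p-1)(w'{-}v')$ after the bookkeeping.

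The core of the atomic lemma splits into the two cases of the Dwork map. When $\gamma{+}r =: \gamma' \in \Z_{(p)}$: among the $kp$ consecutive integers $\gamma', \gamma'{+}1, \ldots, \gamma'{+}kp{-}1$ (thought of $p$-adically — these are not integers but lie in $\Z_{(p)}$), exactly $k$ of them are divisible by $p$, namely those congruent to $0 \bmod p$; writing each such factor as $p \cdot (\text{something})$ and observing that ``something'' runs through a complete progression that is multiplicatively congruent to $\Dp(\gamma'), \Dp(\gamma'){+}1, \ldots, \Dp(\gamma'){+}k{-}1$, while the remaining $kp - k = k(p-1)$ factors are units whose product is $\equivm (-1)^{k}$ modulo $p$ by Wilson's theorem applied to each of the $k$ blocks (each block being a full set of nonzero residues mod $p$, up to the $p$-integral shift, whose product is $\equiv -1$). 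This gives the factor $(-p)^k$ and the shifted Pochhammer $(\Dp(\gamma'))_k$. When $\gamma' \notin \Z_{(p)}$, i.e. $\val_p(\gamma') < 0$: here $\Dp(\gamma') = \gamma'$, every one of the $kp$ factors $\gamma'{+}\ell$ has $\val_p = \val_p(\gamma')$ and is $\equivm \gamma'$-times-a-unit; in fact I would show $(\gamma')_{kp} \equivm \big((\gamma')_k\big)^{?}$... more carefully, $\gamma'{+}\ell \equiv \gamma' \pmod{p \cdot p^{\val_p(\gamma')}\Z_{(p)}}$ is false, so instead one groups the $kp$ factors into $k$ runs of $p$ and shows each run of $p$ consecutive terms is multiplicatively congruent to a single term times $(-1)^{p-1}=1$... the upshot being $(\gamma')_{kp} \equivm$ a power of $p$ times $(\gamma')_k$, with the power of $p$ being $(p-1)\val_p(\gamma')\cdot(\text{wait, no})$ — the valuation contributed is $kp\cdot\val_p(\gamma')$ versus $k\cdot\val_p(\gamma')$ from $(\gamma')_k$, a difference of $k(p-1)\val_p(\gamma')$, which feeds into the $(p-1)(w'{-}v')$ term. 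Then multiplying the atomic identities over all $i$ and dividing over all $j$, and using that $h_r/h_r$ handles the initial segment (the $(\cdot)_r$ denominators cancel against the decomposition $(\gamma)_{kp+r} = (\gamma)_r \cdot (\gamma{+}r)_{kp}$), assembles Proposition~\ref{prop:section}.

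I expect the main obstacle to be the careful bookkeeping of signs and powers of $p$ in the $p$-integral case: one must verify that the product of the $(p-1)$ unit-factors in each length-$p$ block is $\equiv -1 \bmod p$ (a Wilson-type computation that is slightly delicate because the factors are elements of $\Z_{(p)}$ shifted by $\Dp(\gamma')$, not literally $\{1,\ldots,p-1\}$ — though reducing mod $p$ they do form the nonzero residues), and then that these combine across the $k$ blocks and across the $n+m$ parameters to give precisely $(-1)^{\nu k}$ matching the $(-p)^{\nu x}$ substitution — in particular that the sign is $(-1)^\nu$ with the \emph{same} $\nu$ as the power of $p$, which is what makes the clean statement with a single substitution $(-p)^\nu x$ work. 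A secondary subtlety is making sure the multiplicative congruence $\equivm$ (rather than ordinary congruence) is used consistently: it is exactly the right equivalence because it is a multiplicative relation and is preserved under products and quotients of nonzero elements, so the parameter-by-parameter argument glues without loss. Once the atomic lemma is phrased correctly, the rest is a routine product/quotient manipulation.
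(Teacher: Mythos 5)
Your overall strategy — reduce to a single-parameter Pochhammer congruence of the form $(\gamma{+}r)_{kp} \equivm (-p)^{?\,k}\,(\Dp(\gamma{+}r))_k$, split on whether the shifted parameter is $p$-integral, and use Wilson's theorem per length-$p$ block in the $p$-integral case — is exactly the paper's proof; the paper's reduction to $r = 0$ via $h_r\cdot\pFq{\ua{+}r}{\ub{+}r}{x} = \sum_k h_{k+r}x^k$ is your identity $(\gamma)_{kp+r} = (\gamma)_r\,(\gamma{+}r)_{kp}$ phrased globally. However, you do leave a genuine gap in the non-$p$-integral case. Your valuation count $k(p{-}1)\val_p(\gamma')$ is right, but the claim that ``each run of $p$ consecutive terms is multiplicatively congruent to a single term times $(-1)^{p-1}=1$'' does not hold: the correct statement is that each $\gamma'{+}\ell \equivm \gamma'$ (since $\val_p(\gamma')<0$ forces $\val_p(\ell/\gamma')\geq 1$), hence $(\gamma')_{pk}\equivm(\gamma')^{pk}$, and you then need \emph{Fermat's little theorem} to conclude that the unit part of $(\gamma')^{p-1}$ is $\equivm 1$, giving $(\gamma')^{pk}\equivm p^{k(p-1)\val_p(\gamma')}(\gamma')^k$. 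Your argument as written never invokes Fermat and instead inserts an irrelevant factor $(-1)^{p-1}$, which does none of the needed work and in fact tracks the wrong thing. That Fermat appeal is precisely what the paper uses; with it supplied, the rest of your bookkeeping — including the observation that $(-1)^{p-1}=1$ makes both cases deliver pure powers of $(-p)$, so the sign automatically matches the power of $p$ in the substitution $(-p)^\nu x$ — is the paper's argument.
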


\begin{proof}
A direct computation gives
$$\textstyle
h_r \cdot \pFq{\ua{+}r}{\ub{+}r} x = \sum_{k\geq 0} h_{k+r} x^k.$$
Therefore, we can assume without loss of generality that $r = 0$.
Let $\gamma \in \Z_{(p)}$. Among $\gamma, \gamma{+}1, \ldots, 
\gamma{+}p{-}1$, one finds all congruence classes modulo
$p$. Besides, the unique $\gamma + a$ (for $0 \leq a < p$) which lies
in $p \Z_{(p)}$ is $p \Dp(\gamma)$ by definition of the Dwork map.
Thus we get the multiplicative congruence
$$(\gamma)_p = \gamma (\gamma{+}1) \cdots (\gamma{+}p{-}1)
  \equivm (p{-}1)! \cdot p \Dp(\gamma) \pmod p.$$
Hence $(\gamma)_p \equivm -p\Dp(\gamma) \pmod p$ using Wilson's theorem.
Repeating the argument $k$ times and using $\Dp(\gamma + p) = \Dp(\gamma) + 1$,
we end up with
$$(\gamma)_{pk} \equivm (-p)^k \cdot (\Dp(\gamma))_k \pmod p.$$
If now $\gamma \not\in \Z_{(p)}$, we find instead, using Fermat's theorem,
\begin{align*}
(\gamma)_{pk} \equivm \gamma^{pk} 
& \equivm p^{k(p-1)\val_p(\gamma)} \cdot \gamma^k \\
& \equivm (-p)^{k(p-1)\val_p(\gamma)} \cdot (\Dp(\gamma))_k \pmod p.
\end{align*}
For the last congruence, we used $(-1)^{p-1} \equiv 1 \pmod p$.

Putting now all together, we obtain
$$h_{pk} 
  \equivm (-p)^{k\nu} \cdot 
          \frac{(\Dp(\alpha_1))_k \cdots (\Dp(\alpha_n))_k}
               {(\Dp(\beta_1))_k \cdots (\Dp(\beta_m))_k} \pmod p$$
which gives the desired multiplicative congruence.
\end{proof}

\begin{remark}
\cref{prop:section} yields an algorithm with complexity 
$O\big((m{+}n)p \log N\big)$
for computing the multiplicative congruence class modulo $p$ of $h_N$.
Indeed, if $N = p N_1 + r_0$ is the Euclidean division of $N$
by $p$, we have $h_N \equivm h_{r_0} h^{(1)}_{N_1} \pmod p$
where
$$\textstyle
h^{(1)}(x) = \sum_{k \geq 0} h^{(1)}_k x^k 
  \coloneq \pFq{\Dp(\ua{+}r)}{\Dp(\ub{+}r)} {(-p)^\nu x}.$$
Repeating this argument again and again, we finally end up with
a multiplicative congruence of the form
$$h_N \equivm h_{r_0} h^{(1)}_{r_1} h^{(2)}_{r_2}
\cdots h^{(\ell)}_{r_\ell} \pmod p$$
where $\ell$ is the number of digits of $N$ in base $p$, hence 
$\ell \in O(\log N)$. Since moreover $r_i < p$ for all $i$, each
term $h^{(i)}_{r_i}$ can be computed for a cost of $O\big((m{+}n)p\big)$ 
multiplications and divisions.
\end{remark}

\cref{prop:section} can be rephrased by means of classical
congruences as follows.

\begin{corollary}
\label{cor:section}
Let $r$ be a nonnegative integer. Set
$$\textstyle
  g(x) = \sum_{k\geq 0} g_k x^k \coloneq \pFq{\Dp(\ua{+}r)}{\Dp(\ub{+}r)} x.$$
Then 
$\val_p(h_r) + \val_\nu(g(x)) \geq 0$ and
\begin{itemize}
\item
if $\val_p(h_r) + \val_\nu(g(x)) > 0$, then
$\Lambda_r(h(x)) \equiv 0 \pmod p$, \smallskip
\item
if $\val_p(h_r) + \val_\nu(g(x)) = 0$, then
$$\begin{array}{l}
\Lambda_r(h(x)) \\
\hspace{1mm}%
\equiv h_{r+ps} x^s \cdot \pFq{\Dp(\ua{+}r){+}s}{\Dp(\ub{+}r){+}s} {(-p)^\nu x} \!\pmod p
\end{array}$$
\end{itemize}
where $s$ is the smallest integer such that $\val_p(g_s) + \nu s = \val_\nu(g(x))$.
\end{corollary}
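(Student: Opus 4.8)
The plan is to read everything off from \cref{prop:section} by tracking $p$-adic valuations. Write $g(x) = \sum_{k \ge 0} g_k x^k$. Then \cref{prop:section}, together with the identity $h_r\cdot\pFq{\ua{+}r}{\ub{+}r}{x} = \sum_k h_{k+r}x^k$ used in its proof, gives the multiplicative congruences $h_{r+kp} \equivm h_r\, g_k\, (-p)^{\nu k} \pmod p$ for all $k \ge 0$. None of the factors on the right vanishes, since all the parameters occurring (the $\alpha_i,\beta_j$ and their Dwork transforms, possibly further shifted by a nonnegative integer) avoid the nonpositive integers; hence these congruences refine to the exact valuation identity $\val_p(h_{r+kp}) = \val_p(h_r) + \val_p(g_k) + \nu k$. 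Good reduction at $p$ means $\val_p(h_{r+kp}) \ge 0$ for every $k$, and minimizing the right-hand side over $k$ yields at once the finiteness of $\val_\nu(g(x))$ and the inequality $\val_p(h_r) + \val_\nu(g(x)) \ge 0$.

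I would then settle the dichotomy. If $\val_p(h_r) + \val_\nu(g(x)) > 0$, every $\val_p(h_{r+kp})$ is a positive integer, hence at least $1$, so each coefficient of $\Lambda_r(h(x))$ reduces to $0$ modulo $p$. For the equality case, let $s$ be the smallest index with $\val_p(g_s) + \nu s = \val_\nu(g(x))$; then $\val_p(h_{r+ps}) = \val_p(h_r) + \val_\nu(g(x)) = 0$, so $h_{r+ps}$ is a unit of $\Z_{(p)}$. Set $\hat g(x) \coloneq \pFq{\Dp(\ua{+}r){+}s}{\Dp(\ub{+}r){+}s}{x} = \sum_{j\ge 0}\hat g_j x^j$; the Pochhammer relation $(\gamma)_{s+j} = (\gamma)_s(\gamma{+}s)_j$ gives $\hat g_j = g_{s+j}/g_s$. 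Now compare the series coefficientwise. The coefficient of $x^{s+j}$ in $h_{r+ps}\, x^s\cdot\hat g\big((-p)^\nu x\big)$ is $h_{r+ps}\,(g_{s+j}/g_s)\,(-p)^{\nu j}$, of valuation $\val_p(g_{s+j}) + \nu(s{+}j) - \val_\nu(g(x)) \ge 0$; so this coefficient lies in $\Z_{(p)}$ and the proposed right-hand side is a genuine element of $\Z_{(p)}\ps x$. Multiplicatively it equals $h_r g_s(-p)^{\nu s}\cdot(g_{s+j}/g_s)(-p)^{\nu j} = h_r g_{s+j}(-p)^{\nu(s+j)} \equivm h_{r+(s+j)p}$; since multiplicatively congruent elements of $\Z_{(p)}$ are congruent modulo $p$, the $x^{s+j}$-coefficients of $\Lambda_r(h(x))$ and of the right-hand side agree mod $p$. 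Finally, minimality of $s$ forces $\val_p(g_i) + \nu i > \val_\nu(g(x)) = -\val_p(h_r)$ for $0 \le i < s$, so $\val_p(h_{r+ip}) > 0$ and the $x^i$-coefficient of $\Lambda_r(h(x))$ vanishes mod $p$, in agreement with the right-hand side which has no term of degree below $s$. Summing these coefficientwise congruences gives the claimed identity.

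The delicate step, and the one I would take most care with, is the passage between the multiplicative congruence $\equivm$ and ordinary reduction modulo $p$: by itself $\equivm$ controls only the reduction of units, and the auxiliary series $\hat g\big((-p)^\nu x\big)$ need not have $\Z_{(p)}$-coefficients at all. The valuation bookkeeping above is exactly what bridges the two worlds — it simultaneously certifies that $h_{r+ps}\cdot\hat g\big((-p)^\nu x\big)$ is $p$-integral and that each of its coefficients reduces to the matching coefficient $h_{r+(s+j)p}$ of $\Lambda_r(h(x))$.
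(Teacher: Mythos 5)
Your proof is correct and is essentially the intended argument: the paper states Corollary~\ref{cor:section} as a rephrasing of Proposition~\ref{prop:section} without supplying a separate proof, and your bookkeeping — extracting exact valuation identities $\val_p(h_{r+kp}) = \val_p(h_r) + \val_p(g_k) + \nu k$ from the coefficientwise multiplicative congruences, using good reduction to bound them below, then converting $\equivm$ back to ordinary reduction modulo $p$ once $p$-integrality of both sides is certified — is precisely the derivation the authors leave implicit.
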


We emphasize that Corollary~\ref{cor:section} is effective in
the sense that $\val_\nu(g(x))$ and $s$ can both be computed using 
the algorithm of Subsection~\ref{ssec:val} (see also \cref{rem:smallest}).

\subsection{Algebraicity modulo $p$}
\label{ssec:algebraicity}


Writing 
$h(x) \equiv \sum_{r=0}^{p-1} x^r \cdot \Lambda_r(h(x))^p \pmod p$,
we derive from \cref{cor:section} that $h(x) \bmod p$ can be written as 
a linear
combination over $\Fp[x]$ of $p$-th powers of other hypergeometric series.
We call this identity the \emph{Dwork relation} associated to $h(x)$.
Our aim in this subsection is to derive from Dwork relations an algebraic
relation involving uniquely $h(x) \bmod p$.

When $\nu_0 < 0$, the series $h(x) \bmod p$ is a polynomial, and algebraicity
is clear. From now on, we then assume that $\nu_0 = 0$; hence $\nu = 0$ as
well.

The main ingredient of our proof is the fact that iterating Dwork
relations, we only encounter a finite number of auxiliary hypergeometric
series. To establish this, we define $X$ as the smallest subset of $\Q^n 
\times \Q^m$ containing $(\ua, \ub)$ and stable by the operations 
$$(\ua', \ub') \mapsto \big(\Dp(\ua'{+}r), \Dp(\ub'{+}r)\big) 
  \quad (0 \leq r < p).$$
Using that $\Dp(\gamma) = \frac \gamma p + O(1)$ for $\gamma \in \Z_{(p)}$,
we deduce that $X$ is finite (see also \cite[Lemma~3.2.1~(2)]{CFV25}).
Let $(\ua', \ub') \in X$ and write
$$\textstyle
  g(x) = \sum_{k\geq 0} g_k x^k \coloneq \pFq{\ua'}{\ub'} x.$$
If $\zeroval(g(x)) > -\infty$, we let $t$ denote the smallest
integer for which $\val_p(g_t) = \zeroval(g(x))$ and consider the 
new parameters $(\ua'{+}t, \ub'{+}t)$.
We let $Y$ denote the set of all parameters obtained this way by letting
$(\ua', \ub')$ vary in $X$. Clearly, $Y$ is finite as well.

\begin{proposition}
\label{prop:Dworkiterated}
For any nonnegative integer $e$, there exists a relation of the form
$$h(x) \equiv P(x) + \sum_{\ug \in Y} Q_{\ug}(x) \cdot \pGq{\ug} x^{p^e}
\pmod p$$
where $P(x)$ and $Q_{\ug}(x)$ are polynomials in $\Fp[x]$.
\end{proposition}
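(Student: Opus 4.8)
The plan is to prove the statement by induction on $e$. The base case $e = 0$ is handled by the Dwork relation together with \cref{cor:section}: writing $h(x) \equiv \sum_{r=0}^{p-1} x^r \cdot \Lambda_r(h(x))^p \pmod p$, each $\Lambda_r(h(x))$ is, by \cref{cor:section}, either $0$ (contributing only to the polynomial part $P(x)$) or a monomial times a hypergeometric series $\pFq{\Dp(\ua{+}r){+}s}{\Dp(\ub{+}r){+}s}{x}$ with $\nu = 0$. Here the key point is that $(\Dp(\ua{+}r), \Dp(\ub{+}r)) \in X$ by definition of $X$, and the shift by $s$ — where $s$ is the smallest integer realizing $\zeroval(g(x))$ for $g = \pFq{\Dp(\ua{+}r)}{\Dp(\ub{+}r)}{x}$ — lands exactly in $Y$ by definition of $Y$. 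Taking $p$-th powers turns $x$ into $x^p$, so we obtain a relation of the desired form with $e = 1$; the case $e = 0$ is even more trivial since $h(x)$ itself is $\pGq{\ug}{x}$ for $\ug = (\ua,\ub) \in Y$ (note $(\ua,\ub) \in X$ and, as $\zeroval(h(x)) = 0$ with $h_0 = 1$, the associated $t$ is $0$, so $(\ua,\ub) \in Y$), which handles $P = 0$, $Q_{(\ua,\ub)} = 1$.

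For the inductive step, I would suppose the relation holds for some $e$ and apply the same Dwork-relation machinery to each auxiliary series $\pGq{\ug}{x}$ appearing on the right-hand side. The crucial closure property to establish is that running the construction $(\ua',\ub') \mapsto (\Dp(\ua'{+}r), \Dp(\ub'{+}r))$ starting from a parameter in $Y$ still produces parameters in $X$, and that the subsequent $s$-shift stays in $Y$. This follows because $Y \subseteq X$: indeed every element of $Y$ is obtained from an element of $X$ by a shift-by-$t$ that brings the smallest-valuation coefficient to position $0$, and one checks this shifted parameter is again reachable from $(\ua,\ub)$ under the defining operations of $X$ — more directly, $X$ is stable under the operations and the shift-by-$t$ is realized by iterating $\Lambda_0$-type sections (equivalently, $\pFq{\ua'{+}t}{\ub'{+}t}{x} = g_t^{-1}\sum_{k} g_{k+t} x^k$ type identities feeding back into $X$). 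So each $\pGq{\ug}{x}$ with $\ug \in Y$, upon applying \cref{cor:section}, yields a linear combination over $\Fp[x]$ of $p$-th powers of series $\pGq{\ug'}{x}$ with $\ug' \in Y$ again. Substituting $x \mapsto x^{p^e}$ in this relation (as dictated by the inductive hypothesis) and collecting terms, each $p^e$-th power of a $p$-th power becomes a $p^{e+1}$-th power, and polynomial coefficients remain polynomial; we obtain the relation at level $e+1$.

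The one technical subtlety — and the main obstacle — is bookkeeping the case distinction in \cref{cor:section} uniformly: for some $\ug \in Y$ one may have $\zeroval(\pGq{\ug}{x}) = -\infty$, in which case $\pGq{\ug}{x} \bmod p$ need not make sense directly. However, since $p$ is a prime of good reduction for $h(x)$ and $Y$ is built only from parameters genuinely arising in iterated Dwork relations of $h(x)$, every such $g$ that actually contributes has $\zeroval(g(x)) \geq 0$; equivalently, the bad-reduction auxiliary series are exactly those whose sections vanish mod $p$ and hence never enter $Y$ with a nonzero coefficient. Making this precise requires noting that the multiplicative congruence of \cref{prop:section} forces the scalar $h_r$ (resp.\ the analogous leading coefficient) to absorb any negative valuation, so only the good-reduction branch of \cref{cor:section} is relevant when assembling the final polynomial-coefficient relation. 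Once this is observed, the induction closes cleanly: finiteness of $Y$ guarantees the sum $\sum_{\ug \in Y}$ ranges over a fixed finite index set independent of $e$, which is precisely the content of the proposition.
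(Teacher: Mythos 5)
Your overall outline—induction on $e$, base case $(\ua,\ub)\in Y$, inductive step via the Dwork relation and \cref{cor:section}—is the same as the paper's, and your observations about the base case (that $t=0$ for $(\ua,\ub)$ and hence $(\ua,\ub)\in Y$) and about bad-reduction parameters being excluded already in the construction of $Y$ are correct. However, the crucial closure step of the induction has a gap.

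You assert $Y \subseteq X$, justified by the remark that ``the shift-by-$t$ is realized by iterating $\Lambda_0$-type sections.'' This does not hold: the section operator changes parameters via the Dwork map, $\ua' \mapsto \Dp(\ua')$ (up to a further shift coming from \cref{cor:section}), which is \emph{not} an integer translation. The set $X$ is only required to be closed under $(\ua',\ub') \mapsto (\Dp(\ua'{+}r), \Dp(\ub'{+}r))$ for $0 \leq r < p$, and there is no structural reason why the specific translate $(\ua'{+}t, \ub'{+}t)$—where $t$ may exceed $p$—should be reachable from $(\ua,\ub)$ by those operations. The identity $\pFq{\ua'{+}t}{\ub'{+}t}{x} = g_t^{-1}\sum_k g_{k+t}x^k$ relates \emph{series}, not parameters; it does not place $(\ua'{+}t,\ub'{+}t)$ in $X$. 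The paper avoids this entirely: it keeps $\ug \in Y$ in the form $(\ua'{+}s,\ub'{+}s)$ with $(\ua',\ub')\in X$, applies the Dwork relation, and uses the arithmetic identity $\Dp(\gamma{+}s{+}r) = \Dp(\gamma{+}b)+a$ (where $s{+}r = ap+b$ is the Euclidean division) to rewrite the resulting parameters as an element $(\ua'',\ub'') = (\Dp(\ua'{+}b),\Dp(\ub'{+}b)) \in X$ shifted by some $t_b \in \N$. Then, comparing $t_b$ with the canonical shift $t$ attached to $(\ua'',\ub'')$ in the construction of $Y$, one shows $t_b \geq t$ and that $\pFq{\ua''{+}t_b}{\ub''{+}t_b}{x} \bmod p$ equals a polynomial plus a monomial times $\pFq{\ua''{+}t}{\ub''{+}t}{x} \bmod p$—which is now genuinely indexed by an element of $Y$. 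Your proof skips both the reindexing identity and the $t_b \geq t$ comparison, which are precisely the technical content needed to close the loop.
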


\begin{proof}
Let $\ug \in Y$.
Then $\ug = (\ua'{+}s, \ub'{+}s)$ for some $(\ua',\ub') \in X$ and
$s \in \N$.
The Dwork relation makes $\pGq{\ug} x \bmod p$ appear as a
$\Fp[x]$-linear combination of the series
$$\pFq{\Dp(\ua'{+}s{+}r)+s_r}{\Dp(\ub'{+}s{+}r)+s_r} x^p \bmod p
  \quad (0 \leq r < p)$$
for some $s_r \in \N$.
We observe that, if $s{+}r = ap + b$ is the Euclidean division of
$s{+}r$ by $p$, we have $\Dp(\gamma{+}s{+}r) = \Dp(\gamma{+}b) + a$
for all $\gamma \in \Z_{(p)}$. Moreover, when $\gamma \not\in \Z_{(p)}$,
shifting $\gamma$ by any integer does not change the hypergeometric
series modulo $p$. Therefore, $\pGq{\ug} x \bmod p$
is also in the $\Fp[x]$-linear span of the
$$\pFq{\Dp(\ua'{+}b)+t_b}{\Dp(\ub'{+}b)+t_b} x^p \bmod p
  \quad (0 \leq b < p)$$
for some $t_b \in \N$.
Moreover, for each fixed $b \in \{0, \ldots, p{-}1\}$, the pair
$(\ua'', \ub'') \coloneq (\Dp(\ua'{+}b), \Dp(\ub'{+}b))$ is by
definition in $X$. Let $g(x) = \sum_k g_k x^k$ be the associated 
hypergeometric series and let $t$ be the smallest integer such that
$\val_p(g_t) = \zeroval(g(x))$
which $\pFq{\ua''{+}t}{\ub''{+}t} x$ has good reduction
modulo $p$. Thus $t_b \geq t$ and there exists $c_b \in \Fp$, 
$C_b(x) \in \Fp[x]$ such that
$$\begin{array}{l}
\pFq{\ua''{+}t_b}{\ub''{+}t_b} x \smallskip \\
\hspace{1cm}
\equiv C_b(x) + c_b x^{t_b - t} \pFq{\ua''{+}t}{\ub''{+}t} x
\pmod p.
\end{array}$$
It follows that $\pGq{\ug} x \bmod p$ is in the $\Fp[x]$-linear
span of $1$ and the hypergeometric series $\pGq{\ug'} x^p \bmod p$
for $\ug'$ varying in $Y$.

The proposition follows by induction on $e$.
\end{proof}

\begin{theorem}
\label{th:algebraic}
The series $h(x) \bmod p$ is algebraic over $\Fp(x)$.
\end{theorem}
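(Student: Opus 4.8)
The plan is to combine Proposition~\ref{prop:Dworkiterated} with a pigeonhole argument on the finite set $Y$. First I would fix a large integer $e$ and apply Proposition~\ref{prop:Dworkiterated} to each of the hypergeometric series $\pGq{\ug} x$ for $\ug \in Y$, not just to $h(x)$ itself. Since $Y$ is finite, each such series can be written modulo $p$ as a polynomial plus an $\Fp[x]$-linear combination of the series $\pGq{\ug'} x^{p^e}$ with $\ug' \in Y$. Iterating, for every $\ell \geq 1$ one obtains a relation expressing each $\pGq{\ug} x \bmod p$ as a polynomial plus an $\Fp[x]$-linear combination of the $\pGq{\ug'} x^{p^{e\ell}} \bmod p$.

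The second step is the Frobenius/pigeonhole trick. Consider the $\Fp(x)$-vector space (inside $\Fp(x^{1/p^\infty})$, or more concretely inside the perfection) spanned by $1$ together with all series of the form $\pGq{\ug} x^{p^{es}}$ for $\ug \in Y$ and $s \geq 0$. I would instead organize this as follows: let $W$ be the $\Fp(x)$-linear span of $\{1\} \cup \{\pGq{\ug} x : \ug \in Y\}$. The iterated Dwork relation shows that the Frobenius-twisted span $\{1\} \cup \{\pGq{\ug} x^p : \ug \in Y\}$ — after substituting $x \mapsto x$ and raising the relation to suitable powers — generates (over $\Fp(x)$) the same kind of object, so that the $\Fp(x)$-vector space $V$ spanned by $1$ and all $p^{e}$-power substitutions is finite dimensional. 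Concretely, since $\pGq{\ug} x^{p^e} = \big(\text{something involving }\pGq{\ug'} x\big)$ via the relation with the roles of $x$ and $x^{p^e}$ swapped, the collection $\{h, \pGq{\ug} x : \ug \in Y\}$ spans a finite-dimensional $\Fp(x)$-vector space that is stable (up to $\Fp(x)$-linear combinations) under the substitution $x \mapsto x^{p^e}$; a standard argument then shows every element of this space is algebraic over $\Fp(x)$, since $f, f^{(1)} := f(x^{p^e}), f^{(2)} := f(x^{p^{2e}}), \ldots$ all lie in a fixed finite-dimensional space and hence satisfy a linear dependence over $\Fp(x)$, which (because $f^{(i)} = (f^{(i-1)})^{p^e}$ after the coordinate change, or directly because $f(x^{p^{ie}})$ is a purely inseparable power composite) yields a polynomial equation for $f$.

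More cleanly: apply Proposition~\ref{prop:Dworkiterated} and its analogue for each $\ug \in Y$ with the \emph{same} exponent $e$. Collecting the series $f_0 := h$, $f_\ug := \pGq{\ug} x$ into a single tuple $F$, we get a matrix relation $F \equiv P + M \cdot F^{(e)} \pmod p$ where $F^{(e)}$ is the tuple of $p^e$-power substitutions $\pGq{\ug} x^{p^e}$, $P$ is a vector of polynomials, and $M$ has entries in $\Fp[x]$. Now choose $e$ so that the substitution $x \mapsto x^{p^e}$ applied to the original relation expresses $F^{(e)}$ back in terms of $F^{(2e)}$, etc. Since there are only finitely many series involved, iterating produces, for the single series $h$, a sequence of $\Fp(x)$-linear relations among $h, \Lambda(h), \Lambda^2(h), \ldots$ where $\Lambda$ denotes "write each $\pGq{\ug}$-component via Dwork and regroup"; as all these live in the finite set of $\Fp(x)$-combinations of $\{1\} \cup \{\pGq{\ug} x : \ug \in Y\}$, linear algebra over $\Fp(x)$ forces an algebraic (indeed, after untwisting the Frobenius, polynomial) relation for $h(x) \bmod p$.

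\textbf{Main obstacle.} The delicate point is making the bookkeeping of exponents precise: Proposition~\ref{prop:Dworkiterated} produces, at each step, series of the form $\pGq{\ug} x^{p^e}$ with polynomial coefficients $Q_\ug$, and one must check that re-expanding $\pGq{\ug} x^{p^e}$ via a \emph{further} Dwork relation does not leave the finite span $\{1\}\cup\{\pGq{\ug'} x : \ug'\in Y\}$ but rather lands in the $\Fp(x)$-span of its $p^{e}$-th power substitutions — i.e. that the set $Y$ is genuinely closed under the full iteration, not merely under one step. This is exactly what the finiteness of $X$ (hence $Y$) guarantees, together with the identities $\Dp(\gamma + s + r) = \Dp(\gamma + b) + a$ used in the proof of Proposition~\ref{prop:Dworkiterated}; once this stability is in hand, the passage from "finitely many series closed under Frobenius substitution" to "algebraic over $\Fp(x)$" is the classical argument (as in the proof that $\Fp$-automatic series are algebraic, Christol's theorem), and I would cite it rather than reprove it.
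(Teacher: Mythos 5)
Your plan correctly identifies the key ingredients — finiteness of $Y$, Proposition~\ref{prop:Dworkiterated}, and a linear-algebra argument over $\Fp(x)$ — but the pigeonhole step you describe does not go through. You claim the $\Fp(x)$-span of $\{1, h\} \cup \{\pGq{\ug} x : \ug \in Y\}$ is ``stable under $x \mapsto x^{p^e}$,'' so that all iterates live in a single fixed finite-dimensional space; this has the inclusion backwards. The matrix relation $F \equiv P + M F^{(e)}$ shows only that the span of $\{1\} \cup F$ is \emph{contained in} that of $\{1\} \cup F^{(e)}$, not conversely, and iterating pushes you to larger and larger Frobenius levels $F^{(ke)}$, which are generically not $\Fp(x)$-linear combinations of $F$. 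For your argument to close one would need $M$ to be invertible over $\Fp(x)$ (so that $F^{(e)}$ can be solved for in terms of $F$), which is neither established nor true in general.

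The paper's actual proof avoids iterating at a fixed $e$. It sets $N = \text{Card}(Y) + 1$ and applies Proposition~\ref{prop:Dworkiterated} for \emph{each} exponent $e \in \{0, 1, \ldots, N\}$, then raises the $e$-th relation to the power $p^{N-e}$. This \emph{synchronizes} the Frobenius level: each $h(x)^{p^{N-e}}$ becomes an $\Fp[x]$-combination of the single finite family $\{\pGq{\ug} x^{p^N}\}_{\ug \in Y^\bullet}$ of cardinality $N$, so the $N{+}1$ elements $h, h^p, \ldots, h^{p^N}$ all lie in an $\Fp(x)$-space of dimension at most $N$, and a nontrivial right-kernel vector of the resulting matrix gives the annihilating relation directly, with no invertibility hypothesis. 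Your fallback via Christol's criterion is a genuinely different route that could also be made to work: Corollary~\ref{cor:section} shows that the iterated sections of $h(x) \bmod p$ all lie, up to monomial factors $x^s$, in the finite family $\{\pGq{\ug} x \bmod p : \ug \in Y\}$, and containment in a finitely generated $\Fp[x]$-module stable under all $\Lambda_r$ does imply algebraicity — but one must then invoke the module version of Christol's criterion (the monomial prefactors prevent the sections from spanning a finite-dimensional $\Fp$-vector space), which you would need to state and cite carefully rather than gesture at.
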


\begin{proof}
Set $N \coloneq \text{Card}(Y) + 1$.
Raising the relation of \cref{prop:Dworkiterated} to the power
$p^{N-e}$, we obtain 
$$\textstyle
h(x)^{p^{N-e}} 
  \equiv \sum_{\ug \in Y^\bullet} Q_{\ug}(x)^{p^{N-e}} \cdot \pGq{\ug} x^{p^N} \pmod p$$
where we have set $Y^\bullet = Y \sqcup \{\bullet\}$ and $Q_{\bullet}(x) = 
P(x)$, $\pGq{\bullet} x = 1$.
These equalities give a linear system, yielding a single 
matrix relation of the form
$$\Big(h(x)^{p^e}\Big)_{0 \leq e \leq N} 
\equiv \Big( \pGq{\ug} x^{p^N} \Big)_{\ug \in Y^\bullet} \cdot M(x) \pmod p$$
where $M(x)$ is a matrix over $\Fp[x]$ with rows indexed by $Y^\bullet$
and columns indexed by $\{0, \ldots, N\}$.
Hence $M(x)$ has more columns than rows,
and so it has a nontrivial vector $(v_0(x), \ldots, v_N(x))$
in its right kernel. Thus
$v_0(x) h(x) + v_1(x) h(x)^p + \cdots + v_N(x) h(x)^{p^N} = 0$,
proving algebraicity.
\end{proof}

Again, we underline that the proofs of \cref{prop:Dworkiterated}
and \cref{th:algebraic} readily yield an algorithm for computing an
annihilating polynomial of $h(x) \bmod p$.
Its complexity is polynomial when we count operations in $\Fp[x]$
but the size of the polynomials can grow very rapidly, due to
frequent raisings to the $p$-th power.
We believe nonetheless that this blow up is intrinsic to the
problem in the sense that a general hypergeometric series $h(x)
\bmod p$ has only huge annihilating polynomials with coefficients
having degree growing exponentially fast with respect to~$p$.

\newpage
\printbibliography

\end{document}